\newtheorem{theorem}{Theorem}[section]
\newtheorem{Theorem}[theorem]{Theorem}
\newtheorem{Lemma}[theorem]{Lemma}
\newtheorem{Proposition}[theorem]{Proposition}
\theoremstyle{definition}
\newtheorem{Definition}[theorem]{Definition}
\newtheorem{Example}[theorem]{Example}
\theoremstyle{remark}
\numberwithin{equation}{section}
\newcommand{\lc}{\mathcal{L}}
\newcommand{\rc}{\mathcal{R}}
\newcommand{\hc}{\mathcal{H}}
\newcommand{\jc}{\mathcal{J}}
\newcommand{\ec}{\mathcal{N}}
\title{\Large \bf On completely regular and Clifford  ordered semigroups}
\author{A. K. Bhuniya and K. Hansda \footnote{correspondingauthor} \\
\normalsize{Department of Mathematics, Visva-Bharati
University,}\\
\normalsize{Santiniketan, Bolpur - 731235, West Bengal, India}\\
\normalsize{anjankbhuniya@gmail.com}, \
\normalsize{kalyanh4@gmail.com $^*$}}
\date{}
\begin{document}

\maketitle

\begin{abstract}{\footnotesize}
Lee and Kwon \cite{LK}  defined an ordered semigroup $S$ to be
completely regular if $a \in (a^2Sa^2]$ for every $a \in S$. We
characterize every completely regular ordered semigroup as a union
of $t$-simple subsemigroups, and every Clifford ordered semigroup as
a complete semilattice of $t$-simple subsemigroups. Green's Theorem
for the completely regular ordered semigroups has been established.
In an ordered semigroup $S$, we call an element $e$ an ordered
idempotent if it satisfies $e \leq e^2$. Different characterizations
of the regular, completely regular and Clifford ordered semigroups
are done by their ordered idempotents. Thus a foundation for the
completely regular ordered semigroups and Clifford ordered
semigroups has been developed.
\end{abstract}

\section{Introduction}
Ordered semigroups bring the opportunity to study a partial order
together with an associative binary operation, two most simple
algebraic structures on the same set. Simplicity in their definition
makes the ordered semigroups frequent to appear in several branches
of not only in mathematics but also in the whole area of our study
ranging from computer science to social science to economics, on the
other hand, it makes them difficult to characterize.  Contrary to
what one might expect, the passage from the semigroup to the ordered
semigroup case is not straightforward. As an instance, it may be
mentioned that till now we don't have any formulation for the
ordered factor semigroup. Care must be taken to choose the proper
definitions and to justify that the definitions chosen are proper.

There are several articles on ordered semigroups, topological
ordered semigroups etc. Probably the huge impact of regular rings
and semigroups have been influenced the researchers  to introduce
the  natural partial order  on regular semigroups as well as to
introduce a  natural notion of regularity on a partially ordered
semigroup,  which arises out of a beautiful combination of the
partial order and binary operation. Let us call this second kind of
regularity as ordered regularity and semigroups in which every
element is ordered regular as regular ordered semigroups (reason
behind such naming is that regularity is introduced on ordered
semigroups and hence the term regular qualifies the ordered
semigroups), and the first kind as ordered regular semigroups
(because a partial order is considered on a regular semigroup).

T. Saito studied systematically the influence of order on regular,
idempotent, inverse, and completely regular semigroups
\cite{Saito1962} - \cite{Saito1971}, whereas Kehayopulu, Tsingelis,
Cao and many others characterized regularity, complete regularity,
etc. on ordered semigroups \cite{Cao1999} - \cite{CX2000},
\cite{Ke1990}-\cite{LK}. Success attained by the school
characterizing regularity on ordered semigroups are either in the
semilattice and complete semilattice decompositions into different
types of simple components, viz. left, t-, $\sigma$,
$\lambda$-simple etc. or in its ideal theory.

Complete regularity  on ordered semigeoups was introduced by Lee and
Kwon \cite{LK}. Here we give their  complete semilattice
decomposition and express them as a union of $t$-simple ordered
semigroups. This supports the observation of Cao \cite{CX2000} that
the $t$-simple ordered semigroups plays the same role in the theory
of ordered semigroups as groups in the theory of semigroups without
order. Then it follows that the semigroups which are semilattices of
$t$-simple ordered semigroups are the analogue of Clifford
semigroups. Though it is not under the name Clifford ordered
semigroups, but such ordered semigroups have been studied
extensively by Cao \cite{Cao2000} and Kehayopulu \cite{Ke MJ 92},
specially complete semilattice decomposition of such semigroups. We
generalize such ordered semigroups into left Clifford ordered
semigroups. Another successful part of this paper is identification
of the ordered idempotent elements in an ordered semigroup and
exploration of their behavior in both completely regular and
Clifford ordered semigroups. Also an extensive study has been done
on the idempotent ordered semigroups. Complete semilattice
decomposition of these semigroups automatically suggests the looks
of rectangular idempotent semigroups and in this way we arrive to
many other important classes of idempotent ordered semigroups.

The presentation of the article is as follows. This section is
followed by preliminaries. In Section 3, basic properties of  the
$t-$simple ordered semigroups which we call here group like ordered
semigroups have been studied.  Completely regular ordered semigroups
have been characterized in Section 4. Section 5 is devoted to the
the Clifford ordered semigroups and their generalizations.

\section{Preliminaries}
An ordered semigroup  is a partially ordered set $(S, \leq)$, and at
the same time a semigroup $(S,\cdot)$ such that $ \textrm{for all}
\; a, b , c \in S;  \;a \leq b \Rightarrow
 \;ca \leq cb \;\textrm{and} \;ac \leq bc$.
It is denoted by $(S,\cdot, \leq)$. Throughout this article, unless
stated otherwise, $S$ stands for an ordered semigroup and we assume
that $S$ does not contain the zero element.

An equivalence relation $\rho$ on $S$ is called left (right)
congruence if for every $a, b, c \in S; \;a \;\rho \;b \;
\textrm{implies} \;ca \;\rho \;cb \;(ac \;\rho \;bc)$. By a
congruence we mean both left and right congruence. A congruence
$\rho$ is called a semilattice congruence on $S$ if for all $a, b
\in S, \;a \;\rho \;a^{2} \;\textrm{and} \;ab \;\rho \;ba$. By a
complete semilattice congruence we mean a semilattice congruence
$\sigma$ on $S$ such that for $a, b \in S, \;a \leq b$ implies that
$a \sigma ab$.

For every subset $H\subseteq S$, denote $(H] := \{t \in S : t \leq
h, \;\textrm{for \;some} \;h \in H \}$.

An element $a \in S$ is called  ordered regular \cite{ke91}(left
regular \cite{Ke1990}) if  $ \;a \in (aSa] \;(a \in (Sa^2])$.  An
element $b \in S$ is inverse of $a$ if $a \leq aba \;\textrm{and}
\;b \leq bab$. We denote the set of all inverse elements of $a$ in
$S$  by $ \;V_\leq(a)$.

Let $I$ be a nonempty subset of $S$. Then $I$ is called a left
(right) ideal of $S$, if $SI \subseteq I$ ($IS \subseteq I$) and
$(I] \subseteq I$. If $I$ is both a left and a right ideal, then it
is called an ideal of $S$. We call $S$ a (left, right) simple
ordered semigroup if it does not contain any proper (left, right)
ideal. If $S$ is both left simple and right simple, then it is
called $t$-simple.

For $a \in S$, the smallest (left, right) ideal of $S$ that contains
$a$ is denoted by ($L(a), R(a)$) $I(a)$. It is easy to verify that
on a regular ordered semigroup $S$, $ L(a) = (Sa] = \{x \in S \mid x
\leq sa, \; s \in S\}$. Similarly for $R(a)$ and $I(a)$.

Kehayopulu \cite{ke91} defined Green's relations on a regular
ordered semigroup $S$ as follows:
 $$ a \lc b   \; if   \;L(a)= L(b),  \;a \rc b   \; if   \;R(a)= R(b), \;a \jc b   \; if   \;I(a)= I(b), \;\textrm{and} \;\hc= \;\lc \cap
\;\rc.$$
 These four relations $\lc, \;\rc, \;\jc \;\textrm{and} \;\hc$ are
equivalence relations.

A subset $F$ of $S$ is called a filter if for $a,b \in S, \;c \in
F$; $(i) \;ab \in F$ implies that $a \in F \;\textrm{and} \;b \in
F$, and $(ii) \;c \leq a $ implies that $c \in F$.  The smallest
filter containing $a \in S$ is  denoted by $N(a)$.

In \cite{ke91}, Kehayopulu defined a binary relation $\ec$ on $S$
by: for $a, b \in S$,
 $a \ec b \; if \;N(a)=N(b)$.
She proved that $\ec$ is a semilattice congruence and gave an
example  \cite{ke91} to show that this is not the least semilattice
congruence on $S$. In fact, $\ec$ is the least complete semilattice
congruence on $S$ \cite{Gao1998}.

An ordered semigroup $S$  is called complete semilattice of
subsemigroup of type $\tau$ if there exists a complete semilattice
congruence $\rho $ such that $(x)_{\rho}$ is a type $\tau$
subsemigroup of $S$. Equivalently \cite{Ke1990}, there exists a
semilattice $Y$ and a family of subsemigroups $\{S_\alpha\}_{\alpha
\in Y}$ of type $\tau$ of $S$ such that:
\begin{enumerate}
\item \vspace{-.4cm}
$S_{\alpha}\cap S_{\beta}= \;\phi$ for every $\alpha, \;\beta \in
\;Y \;\textrm{with} \; \alpha \neq \beta,$
\item \vspace{-.4cm}
$S=\bigcup _{\alpha \;\in \;Y} \;S_{\alpha},$
\item \vspace{-.4cm}
$S_{\alpha}S_{\beta} \;\subseteq \;S_{\alpha \;\beta}$ for any
$\alpha, \;\beta \in \;Y,$
\item \vspace{-.4cm}
$S_{\beta}\cap (S_{\alpha}]\neq \phi$ implies $\beta \;\preceq
\;\alpha,$ where $\preceq$ is the order of the semilattice $Y$
defined by
$$ \preceq:=\{(\alpha,\;\beta)\;\mid
\;\alpha=\alpha\;\beta\;(\beta\;\alpha)\}. $$
\end{enumerate}

An ordered semigroup $(S, \cdot, \leq)$ is called a semilattice
ordered semigroup if $a \vee b$ exists in the poset $(S, \leq )$ for
every $a, b \in S$. In this case, $ a(b \vee c)=ab \vee ac \;
\textrm{and} \; (a \vee b)c=ac \vee bc $ for every $a, b, c \in S$.

If $F$ is a semigroup, then the set $P_f(F)$ of all finite subsets
of $F$ is a semilattice ordered semigroup with respect to the
product $'\cdot'$ and partial order relation $'\leq'$ given by: for
$A, B \in P_f(F)$,
$$ A \cdot B = \{ab \mid a \in A, b \in B\} \;
\textrm{and} \; A \leq B \; \textrm{if and  only  if} \; A \subseteq
B. $$

Now we show that this semilattice ordered semigroup $P_f(F)$ has the
universal mapping property in the following sense:
\begin{Proposition}
Let $F$ be a semigroup, $S$ be a semilattice ordered semigroup and
$f : F \longrightarrow S$ be a semigroup homomorphism. Then there is
a ordered semigroup homomorphism $\phi : P_f(F) \longrightarrow S$
such that the following diagram is commutative:
\begin{center}
\begin{tikzcd}[row sep=3.5em, column sep=3.5em]
F \arrow{r}{l} \arrow[swap]{dr}{f} & P_f(F) \arrow{d}{\phi} \\
& S
\end{tikzcd}
\end{center}
where $l : F\longrightarrow P_f (F)$ is given by $l(x) = \{x\}$.
\end{Proposition}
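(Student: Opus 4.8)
The plan is to define $\phi$ by sending each finite subset to the join of the $f$-images of its elements, and then to check the three requirements for an ordered-semigroup homomorphism (well-definedness, multiplicativity, isotonicity) together with the commutativity of the triangle. Concretely, for a nonempty $A = \{a_1, \dots, a_n\} \in P_f(F)$ I set
\[
\phi(A) := \bigvee_{a \in A} f(a) = f(a_1) \vee \cdots \vee f(a_n).
\]
This is well defined: since $(S,\leq)$ is a semilattice ordered semigroup the join of any two elements exists, and hence by induction the join of any finite nonempty family exists and is independent of the order in which the binary joins are formed (commutativity and associativity of $\vee$). Since $S$ has no zero element the empty join is unavailable, so the natural domain here is the nonempty finite subsets. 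Taking $A = \{x\}$ gives $\phi(l(x)) = \phi(\{x\}) = f(x)$, so $\phi \circ l = f$ and the diagram commutes.

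Next I would verify that $\phi$ is multiplicative, which is the crux of the argument. Using that $f$ is a semigroup homomorphism and the product in $P_f(F)$,
\[
\phi(A \cdot B) = \bigvee_{a \in A,\, b \in B} f(ab) = \bigvee_{a \in A,\, b \in B} f(a)\, f(b).
\]
On the other side, I expand the product of the two joins using the distributive laws $u(v \vee w) = uv \vee uw$ and $(u \vee v)w = uw \vee vw$ that hold in a semilattice ordered semigroup:
\[
\phi(A)\,\phi(B) = \Big(\bigvee_{a \in A} f(a)\Big)\Big(\bigvee_{b \in B} f(b)\Big) = \bigvee_{a \in A,\, b \in B} f(a)\, f(b).
\]
Comparing the two displays gives $\phi(A \cdot B) = \phi(A)\,\phi(B)$.

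Finally, for isotonicity suppose $A \leq B$ in $P_f(F)$, i.e. $A \subseteq B$. Then for every $a \in A$ we have $a \in B$, so $f(a) \leq \bigvee_{b \in B} f(b) = \phi(B)$; thus $\phi(B)$ is an upper bound of $\{f(a) : a \in A\}$ and therefore $\phi(A) = \bigvee_{a \in A} f(a) \leq \phi(B)$. Hence $\phi$ is order preserving, and together with multiplicativity it is an ordered semigroup homomorphism making the triangle commute.

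I expect the only genuine obstacle to be the passage from the binary distributive law, which is all that the definition of a semilattice ordered semigroup directly supplies, to the fully expanded ``finite $\times$ finite'' distributivity used in the multiplicativity step. This requires a short double induction on $\abs{A}$ and $\abs{B}$ (or a single induction after fixing one factor), invoking associativity and commutativity of $\vee$ to regroup terms; everything else is routine bookkeeping.
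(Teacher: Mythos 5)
Your proposal is correct and follows essentially the same route as the paper: define $\phi(A)=\vee_{a\in A}f(a)$, verify multiplicativity via the distributive laws, verify isotonicity from $A\subseteq B$, and note $\phi\circ l=f$. The only difference is that you spell out the routine induction extending the binary distributive law to finite joins, which the paper leaves implicit.
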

\begin{proof}
Define $\phi : P_f(F) \longrightarrow S$ by: for $A \in P_f(F)$, $
\phi(A)=\vee_{a \in A} f(a)$. Then for every $A, B \in P_f(F),
\;\phi(AB)= \vee_{a \in A, b \in B}f(ab)= \vee_{a \in A, b \in
B}f(a)f(b)= (\vee_{a \in A}f(a))(\vee_{b \in B}f(b))= \phi(A)
\phi(B),$ and if $A \leq B$, then $\phi(A) = \vee_{a \in A}f(a) \leq
\vee_{b \in B}f(b) = \phi(B)$ shows that $\phi$ is an ordered
semigroup homomorphism. Also $\phi \circ l = f$.
\end{proof}

For the notions of semigroups (without order), we refer to Howie
\cite{Howie1995},  and Petrich  and Reilly \cite{PRbook}.

\section{Group like ordered semigroups}
A group $G$ can be considered as a semigroup such that for every $a,
b \in G$,  the equations $a = xb \;\textrm{and} \;a = by$ have
solutions in $G$. Thus a semigroup $S$ is a group if and only if it
is t-simple.

Also we have  following two significant observations. First of which
explores a natural analogy between groups and t-simple ordered
semigroups.
\begin{Proposition}\label{cr1}
A semigroup $F$ is a group if and only if the ordered semigroup
$P_f(F)$  is a t-simple ordered semigroup.
\end{Proposition}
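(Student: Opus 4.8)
The plan is to use the characterization of groups already recorded just before the proposition, namely that a semigroup is a group exactly when it is $t$-simple (left and right simple), and to transfer one-sided simplicity back and forth between $F$ and $P_f(F)$ by testing left/right ideals against singletons. The first step I would record is the standard reformulation of simplicity for an ordered semigroup $S$: $S$ is left simple if and only if $(Sa] = S$ for every $a \in S$, and right simple if and only if $(aS] = S$ for every $a \in S$. This holds because $(Sa]$ is always a nonempty left ideal, so left simplicity forces $(Sa] = S$; conversely, if $(Sa] = S$ for all $a$ and $I$ is a left ideal containing $a$, then $S = (Sa] \subseteq (SI] \subseteq (I] = I$. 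I would state this once and use the right-handed version symmetrically.

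For the direction ``$F$ a group $\Rightarrow P_f(F)$ is $t$-simple'', I would verify $(P_f(F)\cdot A] = P_f(F)$ for every $A \in P_f(F)$. Given an arbitrary $C \in P_f(F)$, fix any $a \in A$ and set $B = \{ca^{-1} : c \in C\}$, a finite subset of $F$. Since $F$ is a group, each $c = (ca^{-1})a \in B\cdot A$, so $C \subseteq B\cdot A$, that is $C \leq B\cdot A$ in $P_f(F)$, whence $C \in (P_f(F)\cdot A]$. Thus $P_f(F)$ is left simple; the symmetric computation with $B = \{a^{-1}c : c \in C\}$ and $c = a(a^{-1}c)$ yields $(A\cdot P_f(F)] = P_f(F)$, i.e. right simplicity. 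Hence $P_f(F)$ is $t$-simple.

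For the converse I would exploit that singletons already detect divisibility in $F$. Assuming $P_f(F)$ is $t$-simple, apply left simplicity to the element $\{a\}$: for any $b \in F$ we have $\{b\} \in (P_f(F)\cdot\{a\}]$, so $\{b\} \subseteq B\cdot\{a\}$ for some finite $B$, which forces $b = b'a$ for some $b' \in B \subseteq F$. Hence $Fa = F$ for every $a \in F$, and the right-simple side gives $aF = F$ for every $a$ in the same way. Therefore $F$ is left and right simple as a semigroup, i.e. $t$-simple, and so $F$ is a group by the remark preceding the proposition.

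I expect the only delicate point to be the bookkeeping with the downward closure $(\,\cdot\,]$. In the forward direction one produces only a set $B$ with $C \subseteq B\cdot A$ rather than an equality $C = B\cdot A$, and it is precisely the closure clause in the definition of a left ideal that lets $C$ itself lie in $(P_f(F)\cdot A]$. In the converse, the essential observation is that restricting the ideal-theoretic condition to singletons recovers exactly the one-sided solvability of $ya = b$ and $ax = b$ in $F$; once this translation is made, no further computation is required.
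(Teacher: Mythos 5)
Your proof is correct and follows essentially the same route as the paper: in the forward direction you build the finite witness set from group divisibility (the paper uses the unique solutions $x_{a,b}, y_{a,b}$ of $a = xb$, $a = by$, which in a group are exactly your $ab^{-1}$ and $b^{-1}a$), and in the converse you test $t$-simplicity on singletons to recover solvability in $F$. The only difference is that you explicitly justify the equivalence between ``no proper one-sided ideals'' and $(Sa] = (aS] = S$, a step the paper silently assumes; that is a welcome bit of extra care rather than a divergence.
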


\begin{proof}
First suppose that  $F$ is a group, and  $A, B \in P_f(F)$. Then for
each $a \in A$ and $b \in B$ there are unique $x, y \in F$ such that
$a= xb$ and $a= by$. Let us denote them by $x_{a,b}$ and $y_{a,b}$
respectively. Then $X= \{ x_{a,b} | a \in A, \;b \in B\} $ and $Y=
\{y_{a,b} | a \in A, \;b \in B\} $ are in $P_f(F)$ such that $A
\subseteq XB \;\textrm{and}  \;A \subseteq BY $, that is $A \leq XB$
and $A \leq BY $. Thus the ordered semigroup $P_f(F)$ is t-simple.

Conversely, assume  that $a, b \in F$. Then both $A= \{a\}$ and $B=
\{b\}$ are elements of $P_f(F)$. Then  there exist $X, Y \in P_f(F)$
such that $A \leq XB$ and $A \leq BY$, that is $A\subseteq XB$ and
$A \subseteq BY $. Hence there are $x \in X, \;y \in Y$ such that
$a= xb$ and $ a= by $, which shows that $F$ is a group.
\end{proof}

Our second observation is that every t-simple ordered semigroup is
regular. Consider a t-simple ordered  semigroup $S$ and let $a$ be
an element of  $S$. Then there are $t, x \in S$ such that $a\leq at$
and $t \leq xa $, which implies that $a \leq axa$.

From the  above observations and according to the context of this
article we wish to call here the t-simple ordered semigroups as
group like ordered semigroups.
\begin{Definition}
An ordered semigroup $S$ is called a group like ordered semigroup
 if for all $a, b \in S \;\textrm{there \;are} \;x, y \in S
\;\textrm{such \;that} \;a \leq xb \;and \;a \leq by$.
\end{Definition}
We further generalize  this structure to left and right group like
ordered semigroups.
\begin{Definition}
A  regular ordered semigroup $S$ is called a left group like ordered
semigroup if for all $a, b \in S \;\textrm{there \;is} \;x \in S$
such that $a \leq xb $.

Right group like ordered semigroup are defined  dually.
\end{Definition}
Thus  an ordered semigroup $S$ is   group like  ordered semigroup if
and only if it is both a left  group like ordered semigroup and a
right group like ordered semigroup.

Following correspondence between   group and left group like ordered
semigroups can be proved similarly to Proposition \ref{cr1}.
\begin{Proposition}\label{cr2}
A semigroup $F$ is a left group if and only if the ordered semigroup
 $P_f(F)$ of all finite subsets of $F$ is a left group like ordered semigroup.
\end{Proposition}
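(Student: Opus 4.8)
The plan is to follow the pattern of Proposition \ref{cr1}, but now invoking the classical fact (see Howie \cite{Howie1995}) that a semigroup is a left group if and only if it is left simple and regular, equivalently, left simple and possessing an idempotent. The defining condition ``$a \leq xb$ for all $a,b$'' is exactly the finite-subset analogue of left simplicity, while the standing regularity hypothesis in the definition of a left group like ordered semigroup is the finite-subset analogue of regularity of $F$. So the proposition should reduce to transferring these two properties between $F$ and $P_f(F)$ in both directions. Throughout, recall that in $P_f(F)$ the order is $\subseteq$ and the product is the setwise product, so $A \leq XB$ means exactly $A \subseteq XB$.

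For the forward implication, assume $F$ is a left group. To lift left simplicity, given $A, B \in P_f(F)$ I would fix some $b_0 \in B$ and, for each $a \in A$, choose by left simplicity of $F$ an element $x_{a} \in F$ with $a = x_a b_0$; then $X := \{x_a : a \in A\}$ is finite and $A \subseteq XB$, i.e. $A \leq XB$. This is the direct analogue of the construction of $X$ in Proposition \ref{cr1}. To verify that $P_f(F)$ is a regular ordered semigroup, I would use that a left group is regular as a semigroup: for each $A$ choose for every $a \in A$ an element $w_a$ with $a = a w_a a$ and put $W := \{w_a : a \in A\}$, so that $A \subseteq AWA$ and hence $A \in (A\,P_f(F)\,A]$.

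For the converse, assume $P_f(F)$ is a left group like ordered semigroup. Testing the left-solvability condition on singletons $A = \{a\}$, $B = \{b\}$ yields $X \in P_f(F)$ with $\{a\} \subseteq X\{b\}$, whence $a = xb$ for some $x \in F$; as $a,b$ were arbitrary this says $F$ is left simple. Testing regularity of $P_f(F)$ on $A = \{a\}$ gives $W$ with $\{a\} \subseteq \{a\}W\{a\}$, i.e. $a = awa$ for some $w \in F$, so $F$ is regular. A left simple regular semigroup is a left group: the element $e = aw$ is an idempotent, it is a right identity since $Se = S$ forces $se = s$ for all $s$, and the standard structure theory then identifies $F$ with a product $G \times L$ of a group and a left zero semigroup. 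Hence $F$ is a left group.

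The step I expect to be the crux is precisely the interplay with regularity. Unlike Proposition \ref{cr1}, where two-sided solvability already forces $F$ to be a group, the left-solvability condition alone only delivers left simplicity, which is strictly weaker than being a left group. Hence the regularity clause built into the definition of a left group like ordered semigroup is not decorative but essential: it is what must be transferred, in both directions via the singleton and element-wise constructions above, to close the equivalence. Getting the correspondence ``$P_f(F)$ regular if and only if $F$ regular'' together with the classical characterization ``left simple plus regular equals left group'' stated cleanly is where the care is needed.
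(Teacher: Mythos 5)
Your proof is correct and follows exactly the route the paper intends: the paper gives no detailed argument, merely remarking that the statement "can be proved similarly to Proposition \ref{cr1}" and "follows from the observation that a semigroup $F$ is a left group if and only if $F$ is both regular and left simple," which is precisely the decomposition (left simplicity via the singleton/element-wise construction, plus the transfer of regularity between $F$ and $P_f(F)$) that you carry out. Your write-up in fact supplies more detail than the paper does, and the details check out.
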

This result follows from the observation that a semigroup $F$ is a
left group if and only if $F$ is both regular and left simple.
\begin{Theorem}\label{cr3}
Let S be an ordered semigroup. Then
\begin{enumerate}
  \item \vspace{-.4cm}
 $S$ is a  group like ordered semigroup if and only if $a \in (bSb]$ for all $a, \;b \in S$.
 \item \vspace{-.4cm}
$S$ is  left group like ordered semigroup if and only if $ a
\in(aSb]$ for all $a, \;b \in S$.
\end{enumerate}
\end{Theorem}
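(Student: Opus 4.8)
The plan is to prove both biconditionals by unwinding the definition of $(H]$ and repeatedly using the order-compatibility of the product, namely that $a\le b$ forces $ca\le cb$ and $ac\le bc$. Throughout, recall that $a\in(bSb]$ means precisely $a\le bsb$ for some $s\in S$, and $a\in(aSb]$ means $a\le asb$ for some $s\in S$.

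For part (1), the direction from the containment condition to group-likeness is immediate: given $a,b$, pick $s\in S$ with $a\le bsb$; then $a\le(bs)b$ and $a\le b(sb)$ supply the required left and right factors $x=bs$ and $y=sb$. For the converse I would apply the defining property twice. First, for the pair $(a,b)$ obtain $x$ with $a\le xb$. Then apply the property again, this time to the pair $(x,b)$, to obtain some $z$ with $x\le bz$. Multiplying $x\le bz$ on the right by $b$ and chaining gives $a\le xb\le bzb$, so $a\in(bSb]$ with witness $s=z$.

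For part (2), recall that a left group like ordered semigroup is by definition regular. In the direction from $a\in(aSb]$ to left group-likeness, regularity comes for free by specialising $b=a$, which yields $a\in(aSa]$ for every $a$; and the factorisation $a\le asb$ immediately gives $a\le(as)b$, so $x=as$ works. For the converse I would combine regularity with the one-sided factorisation: regularity gives $t$ with $a\le ata$, while the left group like property applied to $(a,b)$ gives $x$ with $a\le xb$. Substituting the latter into the trailing $a$ of $ata$ and using order-compatibility yields $a\le ata\le at(xb)=a(tx)b$, so $a\in(aSb]$ with $s=tx$.

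None of the steps presents a genuine obstacle; the only point requiring a little care is the converse of each part, where the desired two-sided (respectively one-sided) factor does not arise from a single application of the hypothesis and must instead be manufactured by a second application followed by a substitution. I would make sure in each substitution to track which occurrence of $a$ is being replaced, since it is precisely the monotonicity of left and right translations that legitimises inserting a longer product inside an existing one.
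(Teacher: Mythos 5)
Your proof is correct and follows essentially the same route as the paper: for each converse you apply the defining inequality twice (obtaining $a\le xb$ and then $x\le bz$, respectively combining regularity $a\le ata$ with $a\le xb$) and chain via monotonicity of translations, exactly as the paper does for part (1) and leaves implicit ("similarly") for part (2). No gaps.
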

\begin{proof}
$(1)$ Let $a, b \in S$. Since  $S$ is a group like ordered semigroup
there are $t, x \in S$ such that $a \leq tb$ and $t \leq bx$, which
implies  that $a \leq bxb$ and so $a \in (bsb]$.

The converse is trivial.

The assertion $(2)$ can be proved similarly.
\end{proof}

Let $a \in S$ be an ordered regular element, then there is  $x \in
S$ be such that $a \leq axa$. This yields that  $ax \leq (ax)^2$ and
$xa \leq (xa)^2$.

Thus in a  regular ordered semigroup there are elements $e \in S$
such that $e \leq e^2$. Later we show that these elements are as
efficient  to describe the structure of  regular ordered semigroups
as idempotents in a regular semigroup without order.
\begin{Definition}
Let $S$ be an ordered semigroup. An element $e \in S$ is called an
ordered idempotent if $e \leq e^2$.
\end{Definition}
We denote the set all ordered idempotents of an ordered semigroup
$S$ by $E_\leq(S)$.

\begin{Lemma}\label{BI13}
Let $L$ be a left and $R$ be a right ideal of regular ordered
semigroup $S$. Then  for every $e, f \in E_\leq (S)$:

\begin{enumerate}
 \item \vspace{-.4cm}
$L \cap (eS] = (eL]$.
\item \vspace{-.4cm}
$R \cap (Se] = (Re]$.
\item \vspace{-.4cm}
$(Sf] \cap (eS] = (eSf]$.
\end{enumerate}
\end{Lemma}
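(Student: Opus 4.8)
The plan is to establish each of the three identities by proving the two inclusions separately. In every case one inclusion is immediate from the definitions of left and right ideal together with the fact that $(\cdot]$ is a down-closure, while the reverse inclusion is where the regularity of $S$ does the real work (via the presentation $t \leq tut$ of an ordered regular element).

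For part (1), the inclusion $(eL] \subseteq L \cap (eS]$ is routine: if $t \leq e\ell$ with $\ell \in L$, then $e\ell \in SL \subseteq L$ forces $t \in (L] \subseteq L$, while $e\ell \in eS$ gives $t \in (eS]$. For the opposite inclusion I would take $t \in L$ with $t \leq es$ for some $s \in S$, and invoke regularity to write $t \leq tut$ for some $u \in S$. Substituting the bound $t \leq es$ into the first occurrence of $t$ (that is, multiplying $t \leq es$ on the right by $ut$) yields $tut \leq esut$, hence $t \leq e(sut)$; since $sut = (su)t \in SL \subseteq L$, we conclude $t \in (eL]$.

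Part (2) is the exact right--left dual: the easy inclusion uses that $R$ is a right ideal, and for the reverse one writes $t \leq tut$ and substitutes $t \leq se$ into the last occurrence of $t$, giving $t \leq (tus)e$ with $tus \in RS \subseteq R$, so $t \in (Re]$. Part (3) combines the two manoeuvres. Starting from $t \leq s_1 f$ and $t \leq es_2$ together with $t \leq tut$, I would substitute $t \leq es_2$ into the leading $t$ and $t \leq s_1 f$ into the trailing $t$ of $tut$, obtaining $t \leq e s_2 u s_1 f = e(s_2 u s_1)f$, which places $t$ in $(eSf]$; the reverse inclusion $(eSf] \subseteq (Sf] \cap (eS]$ is again immediate from $es_2 f \in Sf$ and $es_2 f \in eS$.

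The only genuine step is the reverse inclusions, and within them the point to watch is that after substituting the available upper bounds into the two outer copies of $t$ in $tut$, the surviving middle block lands in the correct one-sided ideal, namely $sut \in L$, $tus \in R$, and $s_2us_1 \in S$; this is exactly what the absorption properties $SL \subseteq L$ and $RS \subseteq R$ guarantee, so no obstacle should arise there. I note in passing that the ordered-idempotent hypothesis on $e$ and $f$ is never used in this argument: regularity of $S$ alone suffices, and the identities hold with $e,f$ taken as arbitrary elements of $S$.
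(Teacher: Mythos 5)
Your proof is correct and follows essentially the same route as the paper: in each case the easy inclusion is read off from the ideal and down-closure axioms, and the reverse inclusion is obtained by writing $t \leq tut$ via regularity and substituting the available upper bounds into the outer occurrences of $t$, landing the middle block in the appropriate one-sided ideal. Your closing observation is also accurate --- the paper's own argument never invokes $e \leq e^2$ either, so the identities indeed hold for arbitrary $e, f \in S$.
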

\begin{proof}
$(1)$ We have $(eL] \subseteq L$, since $L$ is a left ideal of $S$.
Also $(eL] \subseteq (eS]$. Thus $(eL] \subseteq L \cap (eS]$. Let
$y \in L\cap(eS]$. Then there is $s \in S$ such that $y \leq es$.
Since $S$ is regular there is $z \in S$ such that $$y \leq yzy \leq
ezy.$$ Since $L$ is left ideal of $S$, $zy \in L$ and hence
$L\cap(eS] \subseteq (eL]$. Thus $(eL] = L \cap (eS]$.

$(2)$ This is similar to $(1)$.

$(3)$ Let $e, f \in E_\leq(S)$. Then $(eSf] \subseteq (Sf] \cap
(eS]$. Now  consider $z \in(Sf] \cap (eS]$. Then there are $s, t \in
S$ such that $x \leq sf$ and $x \leq et$. By the regularity of $S$,
there is $w \in S$ such that
\begin{align*}
x &\leq xwx\\
  & \leq etwsf.
\end{align*}
Thus $(Sf] \cap(eS] = (eSf]$.
\end{proof}

A group (without order) contains exactly one idempotent. In group
like ordered semigroups such uniqueness does not occur.

\begin{Example}
Consider the ordered semigroup $(\mathbb{R^+}, \;\cdot, \;\leq)$.
Then every positive integer $a \geq 1$ is an ordered idempotent.
\end{Example}

Though there may have many ordered idempotents in a group like
ordered semigroup, now we show that they are related in a meaningful
way.

\begin{Theorem}\label{cr5}
A  regular ordered semigroup $S$ is a  group like ordered semigroup
if and only if for all $e,f \in E_{\leq} (S), \;e \hc f$.
\end{Theorem}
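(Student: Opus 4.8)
The plan is to prove the two implications separately, disposing of the forward direction directly from the group like definition and reserving the real work for the converse, where regularity must be harnessed to manufacture ordered idempotents.

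For the forward implication, assume $S$ is group like and take $e, f \in E_\leq(S)$. Since $S$ is regular, the relations $e \lc f$ and $e \rc f$ are exactly the equalities $(Se] = (Sf]$ and $(eS] = (fS]$. To establish $(Se] \subseteq (Sf]$, I would apply the left half of the group like condition to the pair $(e,f)$ to obtain $x$ with $e \leq xf$; then any $z \leq se$ satisfies $z \leq s(xf) = (sx)f$, so $z \in (Sf]$. The reverse inclusion is symmetric, giving $e \lc f$, and running the right half of the condition dually (i.e. $e \leq fy$) yields $e \rc f$. Hence $e \hc f$, and the converse to this implication is the part that carries content.

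For the converse, assume $S$ is regular and that every pair of ordered idempotents is $\hc$-related; the goal is to verify the criterion $a \in (bSb]$ of Theorem \ref{cr3}(1). Fix $a, b \in S$ and use regularity to choose $p, q$ with $a \leq apa$ and $b \leq bqb$. By the remark preceding the definition of ordered idempotent, the four elements $ap, pa, bq, qb$ all belong to $E_\leq(S)$, and the defining inequalities read $a \leq (ap)a$, $a \leq a(pa)$, $b \leq (bq)b$, $b \leq b(qb)$. The hypothesis then forces $ap \rc bq$ and $pa \lc qb$. Since $ap \leq (ap)^2 \in (ap)S$, we get $ap \in ((ap)S] = ((bq)S]$, so $ap \leq (bq)t$ for some $t$, whence $a \leq (ap)a \leq b(qt)a$; thus $a \leq bua$ with $u = qt$, i.e. $a \in (bSa]$. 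Dually, from $pa \lc qb$ together with $pa \in (S(pa)]$ one obtains $a \leq avb$ for some $v$, i.e. $a \in (aSb]$.

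The crux is the final combining step. Each single $\hc$-relation only relocates $b$ to one side of $a$, producing $a \in (bSa]$ or $a \in (aSb]$, and neither alone matches the two-sided form $(bSb]$. The trick is to use both at once: writing $a \leq bua$ and $a \leq avb$ and substituting the second inequality for the trailing $a$ in the first gives $a \leq bu(avb) = b(uav)b$, so that $a \in (bSb]$, and Theorem \ref{cr3}(1) concludes that $S$ is group like. I expect this substitution, rather than either Green-class computation, to be the main obstacle, since it requires deliberately generating idempotents from both $a$ and $b$ and orienting the two relations ($\rc$ on the left, $\lc$ on the right) so that the resulting inequalities compose into the symmetric expression $b(uav)b$.
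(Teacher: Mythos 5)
Your proof is correct and follows essentially the same route as the paper's: both directions are handled identically in substance, with the converse generating the four ordered idempotents $ap,\,pa,\,bq,\,qb$ from regularity and using the $\hc$-hypothesis to relocate $b$ to either side of $a$. The only divergence is your final substitution producing $a \in (bSb]$ via Theorem \ref{cr3}(1) --- the paper instead reads the same two inequalities as $a \leq b t_1$ and $a \leq t_2 b$, which already realize the definition of group like directly, so the combining step you single out as the crux is valid but unnecessary.
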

\begin{proof}
First suppose that $S$ is  a  group like ordered semigroup. Then
clearly any two elements of $S$ are $\hc-$related . Thus in
particular $e \hc f$ for all $e, \;f \in E_{\leq}(S)$.

Conversely, assume  that $S$ is a regular ordered  semigroup that
satisfies the given conditions. Let $a, b \in S$. Since $S$ is
regular, there are $s, t \in S$ such that $a \leq asa$ and $b \leq
btb$. Then  $as, sa, bt, tb \in E_{\leq}(S)$ and hence $as \leq btu$
and $sa \leq vtb$ for some $u, v \in S$. Now  $a \leq asa$ implies
that $a \leq btua \leq bt_1, \; \textrm{where} \;t_1= \;tua,
\;\textrm{and} \;a \leq avtb \leq t_2 b  \; \textrm{where} \;t_2=
\;avt$. Thus $S$ is a group like ordered semigroup.
\end{proof}

Thus what we mean in a regular semigroup by having unique
idempotent, the same is meant in a regular ordered  semigroup by the
ordered idempotents are in  same $\hc-$class.

In the following we have another similar result which is analogous
to the result that a regular semigroup is inverse if and only if the
idempotents commutate.
\begin{Theorem}\label{cr6}
Let $S$ be a regular ordered  semigroup. Then for every $a \in S$
and $a', \;a'' \in V_{\leq}(a)$, $a' \hc a''$ if and only if for
every $e, \;f \in E_{\leq}(S)$ there is $x \in S$ such that $ef \leq
fxe$.
\end{Theorem}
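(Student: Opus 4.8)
The plan is to prove both implications by translating each side into the language of principal one-sided ideals and ordered idempotents. Throughout I will use that, since $S$ is regular, $L(x)=(Sx]$ and $R(x)=(xS]$ for every $x$; that $V_\leq$ is symmetric, since the two defining inequalities $q\le qpq$, $p\le pqp$ are symmetric in $p,q$; and that if $p\in V_\leq(q)$ then $pq,qp\in E_\leq(S)$ together with $p\rc pq$ and $p\lc qp$, each of which is a one-line consequence of $q\le qpq$, $p\le pqp$ and the downward-closedness of principal one-sided ideals. I will also record the reformulation, via Lemma \ref{BI13}(3), that $(fSe]=(fS]\cap(Se]$, so that the right-hand condition ``$\exists x:\ ef\le fxe$'' is exactly the assertion $ef\in(fS]\cap(Se]$.

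For the direction assuming the idempotent condition, let $a',a''\in V_\leq(a)$. Since $a\le aa'a$ and $a\le aa''a$, the four products $a'a,a''a,aa',aa''$ lie in $E_\leq(S)$, and $a'\rc a'a$, $a''\rc a''a$, $a'\lc aa'$, $a''\lc aa''$; so it suffices to prove $a'a\rc a''a$ and $aa'\lc aa''$. For the first I would note $a'a\le a'(aa''a)=(a'a)(a''a)$, then apply the hypothesis to the pair $a'a,a''a\in E_\leq(S)$ to obtain $(a'a)(a''a)\le (a''a)x(a'a)\in(a''a)S$, whence $a'a\in(a''aS]$ and $(a'aS]\subseteq(a''aS]$; symmetry in $a',a''$ gives equality, i.e. $a'a\rc a''a$. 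The relation $aa'\lc aa''$ is dual, using $aa''\le(aa'a)a''=(aa')(aa'')$ and the hypothesis applied to $aa',aa''$. Combining with the displayed $\rc,\lc$ relations yields $a'\hc a''$.

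For the converse I will manufacture, from $e,f\in E_\leq(S)$, an element on whose inverse set the hypothesis can act. Since $S$ is regular, $ef$ has an ordered inverse $c\in V_\leq(ef)$ (e.g. $c=w(ef)w$ where $ef\le(ef)w(ef)$). The crucial construction is the sandwich $g:=fce$: a direct check, using $ef\le(ef)c(ef)$, $c\le c(ef)c$ and $e\le e^2$, $f\le f^2$, shows simultaneously that $g\in V_\leq(ef)$ and that $g\le g^2$, so that $g\in E_\leq(S)$, while plainly $g\in fS$ and $g\in Se$. Being an ordered idempotent, $g$ satisfies $g\le g^3$, hence $g\in V_\leq(g)$; and as $g\in V_\leq(ef)$ with $V_\leq$ symmetric, also $ef\in V_\leq(g)$. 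Now apply the hypothesis to the single element $g$: its two inverses $g$ and $ef$ must be $\hc$-related, so $ef\hc g$. Reading this off on ideals, from $ef\rc g$ and $g\in(fS]$ I get $ef\in(efS]=(gS]\subseteq(fS]$, and from $ef\lc g$ and $g\in(Se]$ I get $ef\in(Sef]=(Sg]\subseteq(Se]$. By the reformulation $ef\in(fS]\cap(Se]=(fSe]$, i.e. $ef\le fxe$ for some $x$.

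The routine inequality bookkeeping in the sandwich verification is harmless, so the real obstacle is the idea behind it: recognizing that the hypothesis should be applied not to $ef$ itself (whose inverses only report their own location, yielding the trivial membership $ef\in(eSf]$) but to the idempotent inverse $g=fce$, exploiting that an ordered idempotent is its own inverse so that $g$ and $ef$ become two inverses of one element $g$. This is the ordered surrogate for the classical step ``the unique inverse of $ef$ is an idempotent lying in $fSe$,'' with ``$\hc$-related'' in place of ``equal.''
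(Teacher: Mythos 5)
Your proof is correct and follows essentially the same route as the paper: the crux of the hard direction in both is that the sandwich $fxe$ (your $g=fce$) built from $x\in V_{\leq}(ef)$ is simultaneously an ordered idempotent and an ordered inverse of $ef$, so the hypothesis applied to that single element forces $ef\;\hc\;fxe$, while the other direction applies the idempotent condition to the products $aa'$, $aa''$, $a'a$, $a''a$. The only differences are cosmetic: you extract the final membership via Lemma \ref{BI13}(3) and route the converse through the $\rc$- and $\lc$-classes of those idempotents, where the paper writes out the corresponding inequalities directly.
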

\begin{proof}
First suppose that for every $a \in S$ and  $a', \;a'' \in
V_{\leq}(a), \;a' \hc a''$. Consider $e, f \in E_{\leq}(S)$. Since
$S$ is regular we have  $ V_{\leq}(ef)\neq \phi$. Let  $x \in
V_{\leq}(ef)$. Then $x  \leq  x ef x$ implies that $$fxe \leq fxe^2
f^2xe \;\textrm{and} \;ef \leq ef x ef. $$

Then  $ef \leq ef^2 x e^2f$. Thus $ef \in V_{\leq}(fxe)$. Again  $x
\leq x ef x$ yields that $fxe \leq (fxe)^2$, that is,   $fxe \in
E_{\leq}(S)$ and so $fxe \in V_{\leq}(fxe)$. Hence $ef \hc fxe$.
Then there are $u, v \in S$ such that $$ef \leq fxeu \;\textrm{and}
\;ef \leq vfxe.$$ Now $ef \leq ef x ef$ implies that $ef \leq f(xe
uxv fx)e$.

Conversely, assume  that for all $\;e,\;f \in E_{\leq}(S)$ there is
$x \in S$ such that $$ef \in (fSe]  \;\textrm{and} \;a', a'' \in
V_{\leq}(a).$$ Then $aa'', \;aa' \in E_{\leq}(S)$ which implies that
$$aa''aa' \leq aa' x aa'' \;\textrm{for some} \;x \in S.$$  Then we have
\begin{align*}
a' &\leq a'aa'\\
   & \leq a'(aa''a) a'\\
   & \leq a'a  a'x aa''\leq t a'',
\;\textrm{where} \;t = a'a a'x a \in S.
\end{align*}
 Similarly there is $s \in S$
such that $a'' \leq sa'$. Thus $a' \lc a''$ and similarly $a'' \rc
a'$. Hence $a' \hc a''$.
\end{proof}

We introduce inverse ordered semigroups as follows:
\begin{Definition}
A regular ordered semigroup $(S, \cdot, \leq)$ is called inverse if for every $a \in S$ and $a', a'' \in V_{\leq}(a)$, $a' \hc a''$.
\end{Definition}

Inverse ordered semigroups have been studied in \cite{HJ1}

In a semigroup $S$ two elements $a, b$ are said to be
$\hc-$commutative if $ab =bxa$ for some $x \in S$. We define
$\hc-$commutativity in an ordered semigroup as follows.
\begin{Definition}
Let $S$ be an ordered semigroup and let $a, b \in S$. Then $a, b$
are said to $\hc-$commutative if  $ab \leq bxa$ for some  $x \in S$.
\end{Definition}
An ordered semigroup $S$ is called $\hc-$commutative if every  $a, b
\in S$ are $\hc-$commutative.

\section{Completely regular ordered semigroups}
Every completely regular semigroup is a semilattice of completely
simple semigroups and a regular semigroup S is completely regular if
and only if it is a union of groups. The ordered semigroup $P_f(F)$
is a  regular ordered semigroup when $F$ is regular,  where as if
$F$ is a group then $P_f(F)$ is group like ordered semigroup. This
motivates us to characterize all regular ordered semigroups which
are union of group like ordered semigroups.  Lee and Kwon \cite{LK}
defined completely regular ordered semigroups  as follows:
\begin{Definition}
An element $a$  of an ordered semigroup $S$ is called completely
regular if  $a \in (a^{2} S a^{2}]$.
\end{Definition}
We denote the set of completely regular elements of an ordered
semigroup $S$ by $Gr_\leq(S)$. An ordered semigroup $S$ is called
completely regular if if for every $a \in S, \;a \in  (a^2 S a^2]$.

Immediately we have the following result.
\begin{Theorem}\label{cr8}
An  $\hc$-commutative ordered semigroup $S$ is regular if and only
if it is completely regular.
\end{Theorem}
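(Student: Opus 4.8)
The plan is to prove the two implications separately, observing that only one of them actually invokes the $\hc$-commutativity hypothesis. For the implication that complete regularity implies regularity I would note that it holds in an arbitrary ordered semigroup: if $a \in (a^2 S a^2]$, say $a \leq a^2 s a^2$, then $a^2 s a^2 = a(asa)a$, so $a \leq a(asa)a$ with $asa \in S$, giving $a \in (aSa]$. Hence this direction is immediate and needs nothing beyond the definitions.

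The substantive direction is that a regular $\hc$-commutative $S$ is completely regular. Fixing $a \in S$, the goal is to produce $t \in S$ with $a \leq a^2 t a^2$. I would start from a regularity witness $a \leq asa$ and then extract two one-sided improvements from $\hc$-commutativity. Reading $asa = a(sa)$ and applying $\hc$-commutativity to the pair $(s,a)$ gives $sa \leq axs$ for some $x$, whence $a \leq a(sa) \leq a^2 x s$; this is an inequality beginning with $a^2$. Dually, reading $asa = (as)a$ and applying $\hc$-commutativity to $(a,s)$ gives $as \leq sya$ for some $y$, whence $a \leq (as)a \leq s y a^2$; this is an inequality ending in $a^2$.

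The main obstacle is that each of these two facts supplies $a^2$ on only one side, and directly substituting or multiplying them leaves residual factors (a trailing $xs$ on the right, a leading $sy$ on the left) that obstruct the symmetric shape $a^2 S a^2$. The device that overcomes this is to route through an intermediate inequality that opens with $a^2$ yet still closes with a bare $a$. Concretely, I would feed the first bound into the leading $a$ of $a \leq asa$: from $a \leq a^2 x s$, right-multiplying by $sa$ yields $asa \leq a^2 x s^2 a$, and transitivity with $a \leq asa$ gives $a \leq a^2 x s^2 a$. This expression now begins with $a^2$ and terminates in a single $a$, so replacing that terminal $a$ by the second bound $a \leq s y a^2$ produces $a \leq a^2 x s^2 \cdot s y a^2 = a^2 (x s^3 y) a^2$, i.e. $a \in (a^2 S a^2]$ with $t = x s^3 y$.

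I expect the only delicate points to be bookkeeping: checking that every substitution is legitimate (it is, since the order is compatible with multiplication on both sides, so a fixed element may be multiplied onto an inequality and the results chained by transitivity), and applying the two instances of $\hc$-commutativity to the correctly ordered pairs so that one bound opens with $a^2$ while the other closes with $a^2$. Once those are in place the argument is a short chain of inequalities.
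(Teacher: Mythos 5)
Your proposal is correct and follows essentially the same route as the paper: both directions are handled the same way, with the converse being the trivial observation that $a\leq a^2sa^2=a(asa)a$, and the forward direction obtained by applying $\hc$-commutativity to the two pairs $(s,a)$ and $(a,s)$ to push a copy of $a$ to the left and to the right of a regularity chain, ending at $a\leq a^2(xs^3y)a^2$ — the same witness (up to renaming) that the paper extracts from $a\leq axaxaxa$. The only difference is presentational: the paper expands the regularity inequality first and then substitutes, while you build the two one-sided bounds separately and chain them.
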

\begin{proof}
First suppose that $S$ is regular. Let $a \in S$. Then there is $x
\in S$ such that $a \leq axa x axa $. Since $S$ is $\hc-$commutative
there are $x', y' \in S$ such that $xa \leq ax'x$ and $ax \leq
xy'a$. Then $a \leq axa x axa $ gives $a \leq a^2 x'x^3 y'a^2$.
Therefore $S$ is completely regular.

Converse follows trivially.
\end{proof}
Thus $S$ is a completely regular ordered semigroup if and only if
for every $a \in S$ there is $s \in S$ such that $a \leq a^{2} s
a^{2}$. The following proposition not only justifies such
observation but also shows that the size of the class of all
completely regular ordered semigroups is not less than that of the
class of all completely regular semigroups.

\begin{Proposition}\label{cr9}
Let $F$ be  a  semigroup. Then the ordered semigroup $P_f(F)$ of all
subsets of $F$  is a completely regular ordered semigroup if and
only if $F$ is a completely regular semigroup.
\end{Proposition}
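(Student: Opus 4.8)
The plan is to move between $F$ and $P_f(F)$ through singletons, using the reformulation recorded immediately before the statement: an ordered semigroup $S$ is completely regular if and only if for every $a \in S$ there is $s \in S$ with $a \le a^2 s a^2$. Since in $P_f(F)$ the order is set inclusion and the multiplication is the elementwise product, this reformulation reads: $P_f(F)$ is completely regular if and only if for every finite $A \subseteq F$ there is a finite $X \subseteq F$ with $A \subseteq A^2 X A^2$. On the unordered side I will use the standard characterization that $F$ is completely regular precisely when every $a \in F$ admits some $s \in F$ with $a = a^2 s a^2$ (the unordered analogue of Lee and Kwon's definition; it follows from the $\mathcal{H}$-class theory of Green, see Howie and Petrich--Reilly).

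For the implication that complete regularity of $P_f(F)$ forces complete regularity of $F$, I would simply test the condition on singletons. Given $a \in F$, set $A = \{a\}$, so that $A^2 = \{a^2\}$; complete regularity of $P_f(F)$ provides $X \in P_f(F)$ with $\{a\} \subseteq \{a^2\} X \{a^2\} = \{a^2 x a^2 : x \in X\}$. Reading off this membership yields some $x \in X \subseteq F$ with $a = a^2 x a^2$, which is exactly complete regularity of $a$ in $F$; as $a$ was arbitrary, $F$ is completely regular.

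For the converse, assume $F$ is completely regular and fix a finite $A \subseteq F$. For each $a \in A$ choose $s_a \in F$ with $a = a^2 s_a a^2$, and assemble the single finite witness $X = \{ s_a : a \in A\}$. Then for every $a \in A$ we have $a^2 \in A^2$ and $s_a \in X$, so $a = a^2 s_a a^2 \in A^2 X A^2$; hence $A \subseteq A^2 X A^2$, that is $A \le A^2 X A^2$ in $P_f(F)$, giving $A \in (A^2\, P_f(F)\, A^2]$. Thus $P_f(F)$ is completely regular.

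The computations here are routine; the one place I would slow down is the bookkeeping in the converse, namely the passage from an element-by-element choice of witnesses to a single common witness. Each $a \in A$ supplies its own $s_a$, and one must check that collecting these into the finite set $X = \{s_a : a \in A\}$ still yields $A \subseteq A^2 X A^2$ rather than merely separate inclusions into different products. This works because enlarging the middle factor from $\{s_a\}$ to $X$ only enlarges the product, so $A^2 X A^2$ contains $a^2 s_a a^2 = a$ for every $a \in A$ simultaneously. The forward direction, by contrast, is immediate from the singleton trick, which is also the structural reason why complete regularity of $P_f(F)$ is no stronger than that of $F$ taken element by element.
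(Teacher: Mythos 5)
Your proof is correct and follows essentially the same route as the paper's: singletons to pass from $P_f(F)$ down to $F$, and the collected witness set $X=\{s_a : a\in A\}$ to pass back up. The only difference is cosmetic — you are more careful than the paper in writing $a = a^2 s_a a^2$ (rather than the paper's slip $a \leq a^2 s_a a^2$ for the unordered semigroup $F$) and in justifying that a single $X$ works for all of $A$ at once.
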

\begin{proof}
First suppose that $F $ is a completely regular  semigroup. Consider
a finite subset $A$ of  $F$. Then for each $a \in A$, there is
$s_{a} \in F$ such that $a \leq a^{2} s_{a} a^{2}$. Now $X =
\{s_{a}| a \in A\} \in P_f(F)$ is such that $A \subseteq A^{2} X
A^{2} \;i.e \;A \leq A^{2} X A^{2}$. Thus  $ P_f(F)$ is completely
regular.

Conversely, assume that $S= P_f(F)$ is a completely regular ordered
semigroup. Let $a \in F$. Then for $A = \{a \}\in S$ there exists $X
\in S$ such that $A \leq A^{2} X A^{2}$. This implies that there is
$x \in X$ such that $a = a^{2} x a^{2}$ and hence $F$ is a
completely regular semigroup.
\end{proof}
Let $S$ be a group like ordered semigroup. Then for each $a \in S$
there exist $s, t \in S$ such that $a \leq s a^{2} \;\textrm{and}
\;s \leq a^{2}t$. This implies that $a \leq a^{2} t a^{2}$. Thus
every group like ordered semigroup is a completely regular ordered
semigroup.
\begin{Lemma}\label{cr10.1}
Let $S$ be completely regular ordered semigroup. Then for every $a
\in S$ there is $x \in S$ such that $a \leq axa^2$ and $a \leq a^2
xa$.
\end{Lemma}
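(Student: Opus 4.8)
The plan is to start from the definition of complete regularity, which furnishes an $s \in S$ with $a \leq a^2 s a^2$, and then to manufacture a \emph{single} witness $x$ that serves both required inequalities at once. The guiding principle I would isolate first is a substitution rule: whenever $a \leq W_1 a W_2$ for products $W_1, W_2$, the order-compatibility of the multiplication together with $a \leq a^2 s a^2$ gives $a \leq W_1 (a^2 s a^2) W_2$. In other words, any displayed occurrence of $a$ in an upper bound for $a$ may be replaced by $a^2 s a^2$ without disturbing the inequality.

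The tempting choices $x = as$ and $x = sa$ each dispose of only one of the two inequalities. Writing $a^2 s a^2 = a(as)a^2 = a^2(sa)a$ gives $a \leq a(as)a^2$ and $a \leq a^2(sa)a$ for free; but the companion inequalities $a \leq a(sa)a^2 = asa^3$ and $a \leq a^2(as)a = a^3 s a$ cannot be reached by the substitution rule, since every bound produced that way both begins and ends with the block $a^2$, whereas $asa^3$ and $a^3 s a$ do not. This mismatch is the main obstacle: a common $x$ must be chosen more symmetrically, so that the two bound-words it produces differ by a harmless cyclic shift rather than by a genuine asymmetry.

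To get around this I would apply the substitution rule twice to $a \leq a^2 s a^2$, once on each side. Replacing the first $a$ of the trailing factor $a^2$ yields $a \leq a^2 s a^2 s a^3$, while replacing the second $a$ of the leading factor $a^2$ yields $a \leq a^3 s a^2 s a^2$. The decisive observation is that both right-hand sides share the central block $a s a^2 s a$: namely $a^2 s a^2 s a^3 = a(asa^2 s a)a^2$ and $a^3 s a^2 s a^2 = a^2(asa^2 s a)a$. Consequently, setting $x = a s a^2 s a$ gives $a \leq a x a^2$ and $a \leq a^2 x a$ simultaneously, which is exactly the assertion. (As a sanity check, in the group-like model this $x$ plays the role of $a^{-2}$, the unique solution of both equations, which is why no simpler expression can work.)

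The only remaining steps are bookkeeping: verifying the substitution rule from order-compatibility, recording the two single-step substitutions explicitly, and confirming the two bracketings by associativity. None of these presents a genuine difficulty; the entire content of the lemma lies in the symmetric choice of $x$ that reconciles the left-hand and right-hand forms, so I would organize the write-up around that choice and relegate the substitutions to one displayed line each.
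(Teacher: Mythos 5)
Your proof is correct. The substitution rule is a valid consequence of the compatibility of the order with multiplication, both single-step substitutions check out, and the two factorizations $a^2sa^2sa^3 = a(asa^2sa)a^2$ and $a^3sa^2sa^2 = a^2(asa^2sa)a$ really do yield one common witness $x = asa^2sa$ for both inequalities. The paper's proof rests on the same underlying device---substituting $a \leq a^2ta^2$ into itself to lengthen the bounding word until it can be re-bracketed in the form $a^2(\;\cdot\;)a$---but it handles the two inequalities separately, exhibiting a witness for $a \leq a^2xa$ and dismissing $a \leq axa^2$ with a ``similarly'' (and its displayed witness appears to contain a typographical slip besides). Since the statement, and its later use in Theorem \ref{cr29}, require a \emph{single} $x$ serving both inequalities, your symmetric choice of the central block is the more faithful argument: it closes the small gap left by the ``similarly,'' namely whether the two separately produced witnesses can be taken equal. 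One could also repair the paper's version after the fact by splicing the two one-sided witnesses together, but your construction achieves this in one stroke, so I would keep your write-up as proposed.
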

\begin{proof}
Let $a \in S$. Then there is $t \in S$ such that $$a \leq a^2ta^2
\leq a^2(a^2ta^2ta^2ta^2ta^2)=a^2xa; \;\textrm{where} \;x=
a^2ta^2ta^2ta^2ta^2.$$ Similarly $a \leq axa^2$. This completes the
proof.
\end{proof}

Following  equivalent conditions to complete regularity can be
proved easily.

\begin{Theorem}\label{cr10}
In an ordered semigroup $S$ the following conditions are equivalent:
\begin{enumerate}
  \item \vspace{-.4cm}
   $S$ is completely regular;
   \item \vspace{-.4cm}
    $a \in (a^{2} S a] \cap (a Sa^{2}]$ for all $a \in S$;
    \item \vspace{-.4cm}
    $a \in (a^{2} S a] \cap (Sa^{2}]$ for all $a \in S$;
   \item \vspace{-.4cm}
   $a \in (aSa^{2}] \cap (a^{2}S]$ for all $a \in S$;
  \item \vspace{-.4cm}
   $S$ is regular ordered  semigroup and $a \in (a^{2}S] \cap (Sa^{2}]$ for all $a \in
   S$.
\end{enumerate}
\end{Theorem}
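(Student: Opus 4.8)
My plan is to prove that condition (1) is equivalent to each of the remaining conditions, from which the equivalence of all five follows. The implications \emph{out of} (1) are essentially free: Lemma~\ref{cr10.1} supplies, for each $a\in S$, an element $x$ with $a\le a^2xa$ and $a\le axa^2$, so that $a\in(a^2Sa]$ and $a\in(aSa^2]$, which is exactly (2). Moreover $aSa^2\subseteq Sa^2$ and $a^2Sa^2\subseteq a^2S\cap Sa^2$, while $a\le a^2sa^2=a(asa)a$ shows that a completely regular $S$ is in particular regular. Reading off the appropriate pieces then gives (1)$\Rightarrow$(4) and (1)$\Rightarrow$(5) at once, and (2)$\Rightarrow$(3) follows from the inclusion $(aSa^2]\subseteq(Sa^2]$ together with the common factor $(a^2Sa]$.

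The converse implications all rest on a single telescoping device: given two one-sided inequalities for $a$, substitute one of them into a bare occurrence of $a$ at an end of the other and use compatibility of $\le$ with the multiplication to promote that $a$ to $a^2$. Thus for (3)$\Rightarrow$(1) I would take $a\le a^2sa$ and $a\le ta^2$ and compute $a\le a^2sa\le a^2s(ta^2)\in(a^2Sa^2]$; for (4)$\Rightarrow$(1) I would take $a\le asa^2$ and $a\le a^2t$ and promote the leading factor, $a\le(a^2t)sa^2\in(a^2Sa^2]$. In each case a single substitution suffices because one end already carries an $a^2$. Together with the forward implications this closes the cycle (1)$\Rightarrow$(2)$\Rightarrow$(3)$\Rightarrow$(1) and settles (4).

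The one step needing genuine care, and the reason (5) is phrased with the separate hypothesis of regularity, is (5)$\Rightarrow$(1). The two membership conditions $a\in(a^2S]$ and $a\in(Sa^2]$ alone give $a\le a^2v$ and $a\le wa^2$ but provide no inner factor with which to build the form $a^2(\,\cdot\,)a^2$; the missing middle must come from regularity. Using $a\le aua$ from regularity I would promote both ends, first the left via $a\le a^2v$ and then the right via $a\le wa^2$, to obtain $a\le aua\le a^2vua\le a^2vuwa^2\in(a^2Sa^2]$. This double substitution is the only place where the regularity assumption is actually consumed, and recognising that the two-sided conditions in (5) do not by themselves force regularity is the main (and essentially only) subtlety; everything else is bookkeeping of substitutions together with the trivial inclusions among the sets $(a^2Sa]$, $(aSa^2]$, $(Sa^2]$, $(a^2S]$ and $(a^2Sa^2]$.
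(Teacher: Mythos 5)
Your proposal is correct, and every implication checks out: the forward directions out of (1) follow from Lemma~\ref{cr10.1} together with the trivial inclusions, and each converse is a valid one- or two-step substitution landing in $(a^2Sa^2]$. The paper itself offers no proof of this theorem (it only remarks that the equivalences ``can be proved easily''), so there is no argument to compare against; your write-up supplies exactly the routine substitution bookkeeping that the authors leave implicit, and your observation that regularity is consumed only in (5)$\Rightarrow$(1) correctly explains why that clause alone carries an extra hypothesis.
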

Now we show that every element $a$ of a completely regular ordered
semigroup has an ordered inverse element $a'$ that $\hc-$commutes
with $a$.
\begin{Theorem}\label{cr11}
An ordered semigroup $S$ is  completely regular if and only if for
all $a \in S$ there exists $a' \in V_{\leq}(a)$ such that $aa' \leq
a'ua \;and \;a'a \leq ava' \;for \;some  \;u, v \in S$.
\end{Theorem}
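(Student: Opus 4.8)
The plan is to prove Theorem \ref{cr11} by establishing both implications, where the forward direction is the substantive one and the converse is a routine unwinding of definitions.

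\textbf{Forward direction.} Assume $S$ is completely regular and fix $a \in S$. The goal is to construct an inverse element $a' \in V_{\leq}(a)$ that $\hc$-commutes with $a$ in the required two-sided sense. By Lemma \ref{cr10.1} there is $x \in S$ with $a \leq a^2 x a$ and $a \leq a x a^2$. The natural candidate for the inverse is something built from $x$ sandwiched appropriately; I would set $a' = x a x$ or a similar symmetric expression and first verify $a' \in V_{\leq}(a)$, i.e. that $a \leq a a' a$ and $a' \leq a' a a'$. The relation $a \leq a a' a$ should follow directly by substituting the definition of $a'$ and using $a \leq a^2 x a$ (or its companion), while $a' \leq a' a a'$ should follow by a second application of complete regularity applied to the element $x$-sandwich. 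Once $a'$ is an inverse, I must produce $u, v \in S$ with $aa' \leq a'ua$ and $a'a \leq ava'$. The idea is that $aa'$ and $a'a$ are ordered idempotents (since from $a \leq aa'a$ one gets $aa' \leq (aa')^2$ and $a'a \leq (a'a)^2$, exactly as recorded in the discussion preceding the definition of ordered idempotent), and I expect to obtain $u$ and $v$ by inserting another copy of $a$ and $a'$ using the $a \leq a^2 x a \leq a \cdot (a'a) \cdots$ type chains from Lemma \ref{cr10.1}.

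\textbf{Converse direction.} Assume that for every $a \in S$ there is $a' \in V_{\leq}(a)$ with $aa' \leq a'ua$ and $a'a \leq ava'$ for some $u,v \in S$. I want to show $a \in (a^2 S a^2]$. Starting from $a \leq a a' a$, I substitute the first inequality: $a \leq a(a'a) \leq a(ava') \cdot$\,(after reintroducing $a$ via $a \leq aa'a$), aiming to push copies of $a$ to the outside. Concretely, I would expand $a \leq a a' a \leq a a' a a' a$ and then replace the inner $aa'$ by $a'ua$ and the inner $a'a$ by $ava'$ so as to accumulate a leading and trailing $a^2$; the $\hc$-commuting inequalities are precisely what move the idempotent factors to let two $a$'s coalesce on each side. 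This should land in $(a^2 S a^2]$, giving complete regularity.

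\textbf{Main obstacle.} The delicate part is the bookkeeping in the forward direction: choosing the right explicit form of $a'$ so that \emph{simultaneously} $a' \in V_{\leq}(a)$ holds and both one-sided $\hc$-commuting inequalities $aa' \leq a'ua$, $a'a \leq ava'$ can be read off, rather than just one of them. Lemma \ref{cr10.1} gives the two asymmetric factorizations $a \leq a^2 xa$ and $a \leq axa^2$, and the symmetric candidate must be compatible with both at once; getting the single witness $x$ (or a derived element) to serve all three requirements without circularity is where the care lies. I expect the resolution is to define $a'$ from the iterated expression in Lemma \ref{cr10.1}'s proof (the long $a^2 t a^2 t \cdots a^2$ sandwich) so that it is manifestly both an inverse and $\hc$-commuting, after which $u$ and $v$ are exhibited explicitly as products of $a$, $a'$, and $x$.
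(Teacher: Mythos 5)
Your converse direction is essentially the paper's argument and is sound: expanding $a \leq aa'a \leq aa'aa'aa'a$ and replacing the inner factors $a'a$ and $aa'$ by $ava'$ and $a'ua$ respectively yields $a \leq a^2v(a')^3ua^2 \in (a^2Sa^2]$. The problem is the forward direction, which you have only sketched, and whose one concrete ingredient does not work as stated. Your candidate $a' = xax$ built from Lemma \ref{cr10.1} does not obviously satisfy $a \leq aa'a = axaxa$: every substitution of $a \leq a^2xa$ or $a \leq axa^2$ into a factor of one of these inequalities introduces an adjacent extra power of $a$ (one obtains upper bounds such as $axa^3xa$ or $a^2xa^2xa$), and since an ordered semigroup admits no cancellation there is no way to collapse that $a^3$ back to $a$ inside an upper bound. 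So the claim that $a \leq aa'a$ ``should follow directly'' does not go through for that candidate, and neither $a' \leq a'aa'$ nor the two $\hc$-commuting inequalities $aa' \leq a'ua$, $a'a \leq ava'$ is actually verified anywhere in your write-up.

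The paper avoids this by working directly from $a \leq a^2ta^2$ and taking the longer iterated sandwich $a' = a^2ta^2ta^2ta^2$, for which everything is an explicit chain of substitutions: $a \leq a^3ta^2ta^2 \leq a^3ta^2ta^2ta^3 = aa'a$, then $a' \leq a'aa'$, and $aa' \leq a^2ta^2a' \leq a^2ta^2ta^3a' \leq a^2ta^2ta^2ta^4a' = a'ua$ with $u = a^4ta^2ta^2ta$ (and dually for $v$). You correctly anticipate in your ``main obstacle'' paragraph that this is where the real work lies, and you even guess the right shape of $a'$, but the proposal never carries out these verifications; the substantive half of the theorem is therefore missing.
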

\begin{proof}
First assume that $S$ is a completely regular ordered semigroup and
let $a \in S $. Then there is $ t \in S$ such that $a \leq a^2ta^2$.
Now $$ a  \leq a^3ta^2ta^2 \leq  a^3ta^2ta^2ta^3 \leq aa'a,
\;\textrm{where} \;a'= a^2 ta^2 ta^2 ta^2.$$ Also $a' \leq a'aa'$.
Thus $a' \in  V_{\leq}(a)$.  Likewise
$$ aa'\leq a^2 ta^2a' \leq a^2ta^2 ta^3a' \leq a^2 ta^2 ta^2ta^4 a'= \;a'ua, \;\textrm{where} \;u=a^4 ta^2
ta^2 ta \in S.$$ Similarly there is $v \in S$ such that $a'a \leq
ava'$.

Conversely, suppose that each $a \in S$ satisfies the given
conditions. Consider $a \in S$. Then there  is $a' \in V_{\leq}(a)$
 and $ u, v \in S$ such that $$a \leq aa'a, \;a'a \leq aua', \;\textrm{and} \;aa'
\leq a'va.$$ This implies $a \leq aa'a \leq a a'a a' aa'a \leq a^2u
a'^3 va^2,$ and so $a \in (a^2 S a^2]$. Hence $S$ is a completely
regular ordered semigroup.
\end{proof}

\subsection{Structure of completely regular ordered semigroups}
Now we study the structure of completely regular ordered semigroups.
We show that every completely regular ordered semigroup is a union
of group like ordered semigroups.
\begin{Lemma}\label{cr12}
Let $S$ be a completely regular ordered semigroup. Then every
$\hc$-class  is an ordered  subsemigroup. Moreover   if  $H$ is  an
$\hc$-class then for every  $a \in H$  there is $h \in H$ such that
$$ a\leq aha, \;a \leq a^2h, \;\textrm{and} \;a \leq ha^2.$$
\end{Lemma}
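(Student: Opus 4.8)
The plan is to treat the two assertions separately, establishing closure of the $\hc$-class first and then constructing the element $h$.

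For closure, let $a,b \in H$, so that $a \hc b$. I would show $ab \rc a$ and $ab \lc b$; combined with $a \hc b$ these give $R(ab)=R(a)$ and $L(ab)=L(b)=L(a)$, whence $ab \hc a$ and $ab \in H$. Since $S$ is completely regular, hence regular, we may use $R(c)=(cS]$ and $L(c)=(Sc]$, so the inclusions $R(ab)\subseteq R(a)$ and $L(ab)\subseteq L(b)$ are immediate from $abS\subseteq aS$ and $Sab\subseteq Sb$. The substantive inclusions are $a \in (abS]$ and $b \in (Sab]$. For the first, I would invoke complete regularity to write $a \leq a^2 s a^2$ and use $a \rc b$, which gives $a \in (bS]$, say $a \leq bm$; substituting $a \leq bm$ into the second factor of $a^2 s a^2 = a\cdot a\cdot s a^2$ yields $a \leq a(bm)sa^2 = ab(msa^2)$, i.e. $a \in (abS]$. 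The inclusion $b \in (Sab]$ follows dually: write $b \leq b^2 w b^2$, use $b \in (Sa]$ (from $a \lc b$) to get $b \leq pa$, and substitute this into the third factor of $b^2 w b^2 = b\cdot b\cdot w\cdot b\cdot b$ to obtain $b \leq (b^2wp)(ab)\in (Sab]$.

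For the element $h$, the naive guess $h=a'$ for an inverse $a' \in V_\leq(a)$ only delivers $a \leq aha$, and the two one-sided conditions $a \leq a^2h$, $a \leq ha^2$ tend to produce \emph{different} witnesses on the two sides. Forcing a single $h$ to serve all three inequalities is the main obstacle, and I would overcome it with a symmetric choice. By Lemma \ref{cr10.1} there is $x$ with $a \leq a^2 x a$ and $a \leq a x a^2$; I set $h := a x a x a$. Each of the three inequalities then reduces to one provable by substituting the two base inequalities into themselves: substituting $a \leq a^2xa$ into its own second factor gives $a \leq a^2 x a \leq a^3 x a x a = a^2 h$; substituting $a \leq axa^2$ into the factor just after $x$ gives $a \leq axa^2 \leq axaxa^3 = h a^2$; and substituting $a \leq axa^2$ into the last factor of $a \leq a^2xa$ gives $a \leq a^2 x a \leq a^2 x a x a^2 = a h a$. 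Finally $h \in H$ is automatic and needs no separate verification: from $a \leq a^2 h$ and $a \leq h a^2$ one reads $a \in (Sh]\cap (hS]$, while $h=(axax)a \in (Sa]$ and $h=a(xaxa)\in (aS]$, so $L(h)=L(a)$ and $R(h)=R(a)$, that is $h \hc a$.
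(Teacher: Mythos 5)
Your proof is correct. The closure argument runs on the same engine as the paper's --- one shows $ab\,\rc\,a$ and $ab\,\lc\,b$ by feeding a complete-regularity expansion of $a$ (resp.\ $b$) back into itself --- but the paper reaches the key inequality $b\leq (b'x_1y)ab$ by first invoking the $\hc$-commuting inverse $b'\in V_{\leq}(b)$ supplied by Theorem \ref{cr11}, whereas you substitute $a\leq bm$ directly into $a\leq a^2sa^2$ (and dually for $b$); your route is slightly more economical since it bypasses Theorem \ref{cr11} entirely at this stage. The genuine divergence is in the second assertion: the paper dismisses it in one line, claiming $h=a'$ from Theorem \ref{cr11} ``serves our purpose,'' which leaves the reader to check both the three inequalities and the nontrivial fact that $a'\in H$. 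You instead manufacture an explicit witness $h=axaxa$ from the single $x$ of Lemma \ref{cr10.1} and verify all three inequalities by direct substitution; moreover, your observation that $a\leq a^2h$ and $a\leq ha^2$ already give $L(a)\subseteq L(h)$ and $R(a)\subseteq R(h)$, while $h=(axax)a\in (Sa]$ and $h=a(xaxa)\in (aS]$ give the reverse inclusions, settles $h\in H$ with no extra work. In short, your version is fully self-contained and fills in precisely the part the paper only sketches.
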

\begin{proof}
First suppose that   $H$ is  an $\hc$-class, and $a, b \in H$. Then
$a \hc b$. Then  there are $x, y, u, v \in S$ such that $$a \leq xb,
\; b \leq ya, a \leq bu \;\textrm{and} \;b \leq va.$$ Also there are
$a' \in V_{\leq}(a)$ and $b' \in V_{\leq}(b)$ such that
$$bb' \leq b'x_1 b \;\textrm{and} \;a'a \leq ax_2a'$$ for some $x_1, x_2 \in
S$. Now $a \leq xb$ implies that $ab \leq (xb)b$ and from the
definition of ordered inverse we have  $b \leq bb'b$ which implies
$b \leq (b'x_1 b) b \leq (b' x_1y)ab$.  Thus $ab \lc b$.

In a similar manner it can be proved  that $ab \rc a$. So $ab \hc
b$. Thus $ab \in H$ and so  $H$ is an ordered subsemigroup of $S$.

The latter part is fairly straightforward.  $h = a'$ as in Theorem
\ref{cr11} serves our purpose.
\end{proof}

\begin{Theorem}\label{cr13}
In an ordered semigroup $S$,  the following conditions are
equivalent:
\begin{enumerate}
 \item \vspace{-.4cm}
$S$ is completely regular;
  \item \vspace{-.4cm}
each $\hc$-class is a group like ordered semigroup;
  \item \vspace{-.4cm}
$S$ is union of group like ordered semigroups.
\end{enumerate}
\end{Theorem}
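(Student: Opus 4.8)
The plan is to prove the three equivalences along the cycle $(1)\Rightarrow(2)\Rightarrow(3)\Rightarrow(1)$, with $(1)\Rightarrow(2)$ carrying essentially all the weight. The implication $(2)\Rightarrow(3)$ is immediate, since the $\hc$-classes partition $S$, so $S=\bigcup_{H}H$ exhibits $S$ as a union of group like ordered semigroups the moment each $\hc$-class $H$ is known to be group like. For $(3)\Rightarrow(1)$ I would argue pointwise: if $S=\bigcup_i G_i$ with each $G_i$ group like, then any $a\in S$ lies in some $G_i$, and since every group like ordered semigroup is completely regular (as observed just before Lemma~\ref{cr10.1}), we get $a\in(a^2G_ia^2]\subseteq(a^2Sa^2]$; hence $S$ is completely regular.

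For $(1)\Rightarrow(2)$, fix an $\hc$-class $H$. Lemma~\ref{cr12} already supplies the two facts I would start from: $H$ is an ordered subsemigroup, and for each $a\in H$ there is $h\in H$ with $a\le aha$, $a\le a^2h$ and $a\le ha^2$, so that $H$ is itself a regular ordered semigroup admitting an inverse $a'\in V_\le(a)\cap H$. A short check then shows that the local idempotents $aa'$ and $a'a$ lie in $E_\le(S)\cap H$: indeed $aa'\rc a$ because $a\le aa'a$ and $aa'\in(aS]$, while $aa'\lc a'$, and $a'\hc a$ since $a'\in H$, whence $aa'\hc a$; similarly $a'a\in H$. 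These idempotents act as one-sided identities from below, $a\le(aa')a$ and $a\le a(a'a)$. To conclude that $H$ is group like I would use Theorem~\ref{cr3}(1) internally to $H$: it suffices to produce, for all $a,b\in H$, an element $s\in H$ with $a\le bsb$; equivalently, by Theorem~\ref{cr5} applied to the regular ordered semigroup $H$, it suffices to show that any two ordered idempotents of $H$ are $\hc$-related \emph{within} $H$.

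The main obstacle is precisely the passage from $S$-witnesses to $H$-witnesses. Lying in one $\hc$-class gives, from $a\lc b$ and $a\rc b$ together with regularity ($a\le aca$, $a\le bu$, $a\le qb$), a factorization $a\le bmb$, but only with $m=ucq\in S$; and because $H$ is closed merely under its own products, one cannot absorb such an $m$ into $H$ by multiplying on either side. The real content of the proof is therefore to re-derive the required factorizations using \emph{only} multipliers drawn from $H$. My approach would be to build these from the local data: start from $a\le(aa')a$ and $b\le b(b'b)$, feed in the cross-relations coming from $a\hc b$, and then use the $\hc$-commuting relations $aa'\le a'ua$ and $a'a\le ava'$ supplied by Theorem~\ref{cr11} to slide the idempotents $aa',a'a,bb',b'b\in H$ into position so that every stray $S$-factor is trapped between elements of $H$ and hence itself lies in $H$. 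Carrying this bookkeeping through—so that the final left and right multipliers genuinely belong to $H$ rather than merely to $S$—is the delicate step; once it is done, $a\in(bHb]$ for all $a,b\in H$, and Theorem~\ref{cr3}(1) completes $(1)\Rightarrow(2)$.
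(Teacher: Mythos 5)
Your treatment of $(2)\Rightarrow(3)$ and $(3)\Rightarrow(1)$ is fine and agrees with the paper, which dismisses both as obvious. The problem is $(1)\Rightarrow(2)$: you correctly isolate the crux --- producing, for $a,b$ in the same $\hc$-class $H$, a multiplier $h$ with $a\le hb$ that lies in $H$ itself and not merely in $S$ --- but you never actually produce it. ``Carrying this bookkeeping through \ldots is the delicate step'' is an admission that the essential content of the implication is absent from your argument; everything before that point is set-up. Moreover, the mechanism you sketch for closing the gap is not sound: an element ``trapped between elements of $H$'' does not thereby lie in $H$, since $H$ is closed only under products of its \emph{own} elements --- a point you yourself make two sentences earlier when explaining why the factor $m=ucq$ cannot be absorbed. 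Sandwiching a stray $S$-factor between ordered idempotents of $H$ yields a product $h_1mh_2$ with no a priori reason to belong to $H$.

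The paper closes the gap by a different mechanism, and the difference is the whole point. It writes down an explicit candidate $h=a^2ta^2ta^2x\,b^2sb^2sb^2$ (where $a\le a^2ta^2$, $b\le b^2sb^2$, $a\le xb$, $b\le ya$) satisfying $a\le hb$, and then verifies $h\in H$ not through any closure property of $H$ but by computing Green's relations directly: $h\in aS$ together with $a\le hb$ (so $a\in(hS]$) gives $h\,\rc\,a$, hence $h\,\rc\,b$; and $h\in Sb$ together with $b\le byh$ (so $b\in(Sh]$) gives $h\,\lc\,b$; hence $h\,\hc\,b$, i.e.\ $h\in H$. If you wished to salvage your alternative reduction via Theorem~\ref{cr5} applied internally to the regular ordered semigroup $H$, you would still need an argument of exactly this type to show that the ordered idempotents of $H$ are $\hc$-related \emph{inside} $H$, so nothing is saved. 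As it stands, the proposal identifies the right difficulty but does not overcome it.
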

\begin{proof}
$(1) \Rightarrow (2)$: Let $H$ be an $\hc$-class in $S$. Then $H$ is
a subsemigroup of $S$. Consider two elements $a, b \in H$. Then
there are $x, y, u, v \in S$ such that $$a \leq xb, \;b \leq ya, \;a
\leq bu \;\textrm{and} \;b \leq av.$$

Also there are $s, t \in S$ such that $$a \leq a^2 t a^2
\;\textrm{and} \;b \leq b^2 s b^2.$$ Then we have
\begin{align*}
  a &\leq a^2 t a^2\\
&\leq a^2 ta^2 ta^3\\
&\leq a^2 ta^2 ta^2 xb\\
&\leq (a^2 ta^2 ta^2x b^2 sb^2 sb^2 )b\\
&\leq hb,
  \;\textrm{where} \;h = a^2 ta^2 ta^2x b^2 sb^2 sb^2  \in S.
\end{align*}
Now $h = a(a ta^2 ta^2x b^2 sb^2 sb^2 )$ shows that $h \rc a$ and so
$h \rc b$.\\
Also
\begin{align*}
 b &\leq b^2 s b^2\\
&\leq b^3 sb^2 sb^2\\
&\leq by ab sb^2 sb^2\\
&\leq by(a^2 ta^2 ta^3 bs b^2 sb^2 )\\
&\leq by(a^2 ta^2 ta^2 xb^2 sb^2
sb^2)\\
&\leq byh.
\end{align*}
This  shows that $h \lc b$. Thus $h \in H$ and such that $a \leq
hb$. Similarly there is $h' \in H$ such that $a \leq bh'$. Hence $H$
is a group like ordered semigroup.

$(2)\Rightarrow (3)$ and $(3)\Rightarrow (1)$: These  are obvious.
\end{proof}

We now focus to  the  group like ordered subsemigroups  that contain
an ordered idempotent.

\begin{Lemma}\label{cr0.1}
Let $S$ be an  ordered semigroup and $Gr_\leq(S)\neq \phi$. Then for
every $a \in Gr_\leq (S)$ there is $e \in E_\leq(S)$ such that $a
\leq ea, \;a \leq ae$.
\end{Lemma}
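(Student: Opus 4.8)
The plan is to manufacture a single ordered idempotent out of the completely regular witness of $a$. Since $a \in Gr_\leq(S)$, fix $s \in S$ with $a \leq a^2 s a^2$. A natural first attempt is to use ordered regularity: rewriting $a \leq a^2 s a^2 = a(asa)a$ exhibits $a$ as ordered regular with witness $y = asa$, and then, by the observation recorded just before the definition of ordered idempotent, both $ay = a^2 s a$ and $ya = a s a^2$ belong to $E_\leq(S)$. These already yield one-sided sandwiches, since $(a^2 s a)a = a^2 s a^2 \geq a$ and $a(a s a^2) = a^2 s a^2 \geq a$. The obstacle — and the only genuine difficulty of the lemma — is that $a^2 s a$ dominates $a$ only from the left and $a s a^2$ only from the right; neither is two-sided, and for instance $a(a^2 s a) = a^3 s a$ need not dominate $a$.

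To get around this I will take the \emph{product} of the two one-sided idempotents and set
$$ e := (a^2 s a)(a s a^2) = a^2 s a^2 s a^2 . $$
I will verify the two sandwich inequalities by single substitutions of $a \leq a^2 s a^2$ into the word $a^2 s a^2$. Replacing the factor $a$ immediately following $s$ gives
$$ a \leq a^2 s a^2 \leq a^2 s (a^2 s a^2) a = a^2 s a^2 s a^3 = ea, $$
so $a \leq ea$; replacing instead the factor $a$ immediately preceding $s$ gives $a \leq a(a^2 s a^2) s a^2 = a^3 s a^2 s a^2 = ae$, so $a \leq ae$. (Both steps are legitimate because the order is compatible with multiplication on both sides, so enlarging a subfactor enlarges the whole word.)

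The main step is then to check $e \in E_\leq(S)$, that is $e \leq e^2$. Computing the square, $e^2 = a^2 s a^2 s a^4 s a^2 s a^2$, I will obtain the required inequality by substituting into the \emph{central} block $a^2$ of $e$. From $a \leq a^2 s a^2$ and compatibility one gets $a^2 = a\cdot a \leq (a^2 s a^2)(a^2 s a^2) = a^2 s a^4 s a^2$, and feeding this into the middle of $e$ yields
$$ e = a^2 s (a^2) s a^2 \leq a^2 s (a^2 s a^4 s a^2) s a^2 = a^2 s a^2 s a^4 s a^2 s a^2 = e^2 . $$
Hence $e \leq e^2$, so $e \in E_\leq(S)$, and combined with $a \leq ea$ and $a \leq ae$ this completes the argument. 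I will note finally that the entire proof invokes only the single relation $a \leq a^2 s a^2$, so it goes through under the stated hypothesis $a \in Gr_\leq(S)$ in an arbitrary ordered semigroup, without assuming $S$ to be completely regular.
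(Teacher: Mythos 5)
Your proof is correct and arrives at exactly the same witness as the paper: the element $e = a^2 s a^2 s a^2$ (the paper writes $a^2 t a^2 t a^2$), obtained by a double substitution of $a \leq a^2 s a^2$, with the same verifications of $a \leq ae$, $a \leq ea$, and $e \leq e^2$. The only difference is presentational — you motivate $e$ as the product of the two one-sided idempotents $a^2sa$ and $asa^2$ and spell out the check that $e \leq e^2$, which the paper asserts without computation.
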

\begin{proof}
Consider $a \in Gr_\leq (S)$. Then there is $t\in S$ such that
\begin{align*}
a &\leq a^2 ta^2\\
  & \leq a(a^2 ta^2 ta^2)= ae, \;\textrm{where} \;e= a^2 ta^2
ta^2 \in E_\leq(S).
\end{align*}
 Similarly $a \leq ea$.
\end{proof}

Next lemma is  straight forward that follows similarly to the above
lemma.

\begin{Lemma}\label{cr0.3}
 Let $S$ be an  ordered semigroup and $e \in E_\leq(S)$. Then
for every $a \in eSe$,  $a \leq ea \;and \;a \leq ae$.
\end{Lemma}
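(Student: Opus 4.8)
The plan is to reduce everything to the single defining inequality $e \leq e^2$ of an ordered idempotent together with the compatibility of $\leq$ with the multiplication. Since $a \in eSe$, the first step is to record that $a = ese$ for some $s \in S$; this is the only structural fact about $a$ that I will use, and no regularity hypothesis on $S$ is required.

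For the inequality $a \leq ea$, I would start from $e \leq e^2$ and multiply on the right by the element $se \in S$, which the order-compatibility of $S$ permits. This yields $ese \leq e^2 se$, and regrouping the right-hand side as $e(ese)$ gives exactly $a \leq ea$. The claim $a \leq ae$ is entirely symmetric: multiplying $e \leq e^2$ on the left by $es$ produces $ese \leq ese^2 = (ese)e$, that is, $a \leq ae$.

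There is essentially no obstacle here; the only point worth flagging is that an ordered idempotent satisfies merely the one-sided relation $e \leq e^2$ rather than the equality $e = e^2$, so one must be careful to apply the multiplication on the correct side in each case so that the inequality propagates in the desired direction. Because neither regularity nor complete regularity is invoked, this is indeed an immediate companion to Lemma \ref{cr0.1}, the difference being that there the ordered idempotent $e$ had to be \emph{constructed} from a completely regular element, whereas here $e$ is given at the outset and $a$ is simply assumed to lie in $eSe$.
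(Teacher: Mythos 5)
Your proof is correct and is exactly the "straightforward" argument the paper intends but omits (the paper only remarks that the lemma follows similarly to Lemma \ref{cr0.1}): writing $a=ese$ and multiplying $e\leq e^2$ on the appropriate side by $se$ (resp.\ $es$) gives $a\leq ea$ (resp.\ $a\leq ae$). Your closing observation that no regularity is needed and that $e$ is given rather than constructed is also accurate.
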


Let $S$ be a completely regular ordered semigroup. For  $e \in
E_\leq(S)$ let us construct the set $$G_e= \{a \in S: a \leq ea, \;a
\leq ae \;\textrm{and} \;e \leq za, \;e \leq az \;\textrm{for some}
\;z\in S\}.$$

Now  $e \in G_e$ implies $G_e$ is nonempty.
\begin{Lemma}\label{cr0.2}
Let $S$ be a completely regular ordered semigroup. Then for every $a
\in S$ there is $e \in E_\leq(S)$ and $z \in G_e$ such that  $a \leq
ea, \;a \leq ae$ and  $ \;e \leq za, \;e\leq az$.
\end{Lemma}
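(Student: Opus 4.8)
The plan is to exhibit explicit witnesses $e$ and $z$ built from the single parameter supplied by complete regularity, and then to check $z\in G_e$ directly against the four defining conditions of $G_e$. Fix $a\in S$; since $S$ is completely regular there is $t\in S$ with $a\leq a^{2}ta^{2}$. Writing $p=a^{2}ta^{2}\geq a$, I first record the two elementary padding inequalities $a^{2}\leq a^{3}ta^{2}$ and $a^{2}\leq a^{2}ta^{3}$, each obtained by multiplying $p\geq a$ by $a$ on the appropriate side. These are the only order facts the argument will use.

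I then set $e:=a^{2}ta^{2}ta^{2}$ and $z:=a^{2}ta^{2}ta^{2}ta^{2}$. That $e\in E_{\leq}(S)$ together with $a\leq ea$ and $a\leq ae$ is precisely Lemma \ref{cr0.1} with this explicit choice of $e$. For the two new relations I would substitute a padding inequality exactly once: replacing the leading block $a^{2}$ of $e$ by $a^{3}ta^{2}$ gives $e\leq a^{3}ta^{2}ta^{2}ta^{2}=az$, and replacing the trailing block $a^{2}$ of $e$ by $a^{2}ta^{3}$ gives $e\leq a^{2}ta^{2}ta^{2}ta^{3}=za$. Hence $e\leq za$ and $e\leq az$, which supplies one half of what the statement demands.

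The heart of the matter is verifying $z\in G_e$, and this is also where I expect the main obstacle: the definition of $G_e$ requires a \emph{single} element $z$ that works on both sides at once. The resolution is the double factorisation $z=e\,(ta^{2})=(a^{2}t)\,e$. From $e\leq e^{2}$ it follows, on right-multiplying by $ta^{2}$, that $z=e(ta^{2})\leq e^{2}(ta^{2})=ez$, and, on left-multiplying by $a^{2}t$, that $z=(a^{2}t)e\leq(a^{2}t)e^{2}=ze$; these are the first two conditions $z\leq ez$ and $z\leq ze$ in the definition of $G_e$. For the remaining two it suffices to take the witness $y=a$: then $yz=az\geq e$ and $zy=za\geq e$ by the previous paragraph, so $e\leq yz$ and $e\leq zy$. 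Therefore $z\in G_e$, and assembling $a\leq ea$, $a\leq ae$, $e\leq za$, $e\leq az$ with $z\in G_e$ yields the assertion. Conceptually the whole proof collapses to the two inputs $a^{2}ta^{2}\geq a$ and $e\leq e^{2}$; the only care needed is to keep the block substitutions aligned when expanding the words $az$, $za$, $ez$ and $ze$.
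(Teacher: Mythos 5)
Your proposal is correct and takes essentially the same route as the paper: it chooses the identical witnesses $e=a^{2}ta^{2}ta^{2}$ and $z=a^{2}ta^{2}ta^{2}ta^{2}$, invokes Lemma \ref{cr0.1} for $a\leq ea$, $a\leq ae$, and verifies $e\leq za$, $e\leq az$ and $z\in G_e$ with $a$ as the witness for the latter membership. The only (cosmetic) difference is that you derive $z\leq ez$ and $z\leq ze$ from the factorisation $z=e(ta^{2})=(a^{2}t)e$ together with $e\leq e^{2}$, which is a slightly cleaner bookkeeping than the paper's chain of padding substitutions $z\leq zata^{2}\leq ze$, but the content is the same.
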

\begin{proof}
Let $a \in S$. Then there is $e= a^2 ta^2 ta^2 \in E_\leq(S)$ such
that $a \leq ea $ and $a \leq ae$, by Lemma \ref{cr0.1}.

Also $$e= a^2 ta^2 ta^2 \leq (a^2 ta^2 ta^2 ta^2) a$$ and likewise
$a \leq a(a^2 ta^2 ta^2 ta^2)$. Denote  $z= a^2 ta^2 ta^2 ta^2$.
Then $e \leq za$ and similarly $e \leq az$.

To prove  $z= a^2 ta^2 ta^2 ta^2 \in G_e$ we can see that
\begin{align*}
z &= a^2 ta^2 ta^2 ta^2\\
  &\leq a^2 ta^2 ta^2 ta^3ta^2\\
  &=zata^2\\
  &\leq za^2 ta^2 ta^2=ze.
\end{align*}
  Similarly $z \leq ez$. This completes the proof.
\end{proof}
As a consequence of previous lemma  we have the following theorem
that states that for every ordered idempotent in a completely
regular ordered semigroup there is a group like ordered
subsemigroup.
\begin{Theorem}\label{2.1}
Let $S$ be a completely regular ordered semigroup. Then for  every
$e \in E_\leq(S)$ the set $G_e= \{a \in S: a \leq ea, \;a \leq ae
\;and \;e \leq za, \;e \leq az \;for \;some \;z\in S\}$ is a group
like subsemigroup of $S$.
\end{Theorem}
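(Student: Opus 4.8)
The plan is to verify the two defining requirements for $G_e$ to be a group like ordered semigroup in the sense of the Definition above: that $G_e$ is closed under the product, and that for all $a,b \in G_e$ there exist $x,y$ \emph{inside} $G_e$ with $a \leq xb$ and $a \leq by$. Throughout I will use only that $e \leq e^2$ together with the membership conditions defining $G_e$; complete regularity of $S$ is what guarantees (via Lemma \ref{cr0.2}) that the sets $G_e$ cover $S$, but it is not actually needed for this statement itself.

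First I would show $G_e$ is an ordered subsemigroup. Fix $a,b \in G_e$ with witnesses $z_1,z_2 \in S$, so that $e \leq z_1a$, $e \leq az_1$, $e \leq z_2b$, $e \leq bz_2$, together with $a \leq ea$, $a \leq ae$, $b \leq eb$, $b \leq be$. The conditions $ab \leq e(ab)$ and $ab \leq (ab)e$ are immediate from $a \leq ea$ and $b \leq be$. For the remaining condition I would produce the single witness $z_3 = z_2z_1$: starting from $b \leq eb$ and then inserting $e \leq z_1a$ gives $e \leq z_2b \leq z_2eb \leq z_2z_1ab$, and symmetrically $a \leq ae$ together with $e \leq bz_2$ gives $e \leq az_1 \leq aez_1 \leq abz_2z_1$. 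Hence $e \leq z_3(ab)$ and $e \leq (ab)z_3$, so $ab \in G_e$.

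The step I expect to be the crux is the construction of inverses that genuinely lie in $G_e$. The naive choice $x = az_2$ solving $a \leq xb$ (coming from $a \leq ae \leq az_2b$) need not belong to $G_e$, since nothing controls $z_2$ relative to $e$ on the right. The remedy I plan to use is to replace a raw witness $z'$ of $a$ by its conjugate $a^\ast := ez'e$. Because $a^\ast$ carries $e$ as both a left and a right factor, $e \leq e^2$ forces $a^\ast \leq ea^\ast$ and $a^\ast \leq a^\ast e$ automatically; and using $a \leq ea$, $a \leq ae$, $e \leq z'a$, $e \leq az'$ one checks $e \leq a^\ast a$ and $e \leq aa^\ast$. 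These last two inequalities do double duty: with witness $a$ they show $a^\ast \in G_e$, and they say $a^\ast$ is a genuine two-sided $e$-inverse of $a$ living inside $G_e$.

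Granting this, the group like property is short. Given $a,b \in G_e$, take the inverse $b^\ast \in G_e$ of $b$ and set $x = ab^\ast$ and $y = b^\ast a$; both lie in $G_e$ since $G_e$ is closed under products. Then $xb = ab^\ast b \geq ae \geq a$ and $by = bb^\ast a \geq ea \geq a$, so $a \leq xb$ and $a \leq by$ with $x,y \in G_e$, which is exactly the condition that $G_e$ is a group like ordered semigroup. The only delicate point is the $ez'e$ conjugation of the previous paragraph; once inverses are known to reside in $G_e$, everything else reduces to a two-line order computation.
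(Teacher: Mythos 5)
Your proof is correct and takes essentially the same route as the paper's: closure is verified with the same combined witness ($z_2z_1$, the paper's $z_1w_1$), and your conjugated element $b^\ast = ez'e$ is precisely the paper's $ese$ (a witness sandwiched by $e$), which the paper likewise shows lies in $G_e$ and uses via $x \leq x(ese)y$ to conclude $x \in (G_e y]$. Your side remark that only $e \leq e^2$ and the defining conditions of $G_e$ are needed (complete regularity of $S$ playing no role in this particular statement) is accurate and consistent with the paper's argument.
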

\begin{proof}
First choose $a, b \in G_e$. Now $ab \leq a(be)= (ab)e$, similarly
$ab \leq e(ab)$. Also for $a,b \in G_e$ there are $z_1, w_1 \in S$
such that $$e \leq z_1b, \;e \leq bz_1, \;e \leq w_1 a
\;\textrm{and} \;e \leq aw_1.$$ So $e \leq z_1b$ implies that
\begin{align*}
e &\leq z_1(eb)\\
  & \leq z_1(w_1a)b\\
  &= (z_1w_1) ab\\
  &=h_1 ab, \;\textrm{where} \;h_1=
z_1 w_1.
\end{align*}
Similarly $e \leq ab h_1$.  Therefore $ab \in G_e$ and so $G_e$ is a
subsemigroup of $S$.

To show that $G_e$ is  group like,  let us choose $x, y \in G_e$.
Then from $x \leq xe$. Then there is $s \in S$ such that $e \leq sy$
and $e \leq es$. Thus  $x \leq xs y \leq x(ese)y$. Now $ese \leq
e(ese)$ and $e \leq (ese)e$. Also $e \leq e^2 \leq esy \leq (ese)y$
and $e \leq e^2 \leq yse\leq y(ese)$. This shows that $ese \in G_e$
and so $x\in (G_ey]$. Also $x\in (yG_e]$ follows dually.   Hence
$G_e$ is a group like ordered subsemigroup of $S$.
\end{proof}

Now we characterize the complete semilattice decomposition of a
completely regular ordered semigroup.
\begin{Lemma}\label{cr14}
Let $S$ be a completely regular ordered semigroup. Then $\jc$ is the
least complete semilattice congruence on  $S$.
\end{Lemma}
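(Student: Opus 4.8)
The plan is to prove directly that $\jc$ is a complete semilattice congruence; once that is in hand, its minimality among all complete semilattice congruences falls out of a two-line computation, so I never have to appeal to any explicit description of $\ec$ beyond the fact, recorded in the preliminaries, that a least complete semilattice congruence exists. Throughout I use that a completely regular ordered semigroup is regular, so that $I(a)=(SaS]$ for every $a$, together with $a\in(Sa^2]\cap(a^2S]$ from Theorem \ref{cr10}.

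The technical heart is a single identity: for all $x,y\in S$, $I(xy)=I(x)\cap I(y)$, i.e. $(SxS]\cap(SyS]=(S(xy)S]$. Before proving it I would first record the commutativity law $ab\,\jc\,ba$, which follows straight from complete regularity: writing $ab\le(ab)^2t(ab)^2=abab\,t\,abab=(a)(ba)(btabab)$ exhibits $ab\in(S(ba)S]$, and symmetrically $ba\in(S(ab)S]$, so $I(ab)=I(ba)$. For the identity itself the inclusion $(S(xy)S]\subseteq(SxS]\cap(SyS]$ is immediate, since $x$ is a left factor and $y$ a right factor of $xy$. The reverse inclusion is the main obstacle, because left multiplication and the two-sided ideal of $y$ do not interact term by term. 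I would clear it as follows: given $z\in(SxS]\cap(SyS]$, pick $z\le s_1xs_2$ and $z\le s_3ys_4$, and use regularity ($z\le zwz$) to merge the two into $z\le s_1\,x\,(s_2ws_3)\,y\,s_4=s_1\,xmy\,s_4$. Now relocate the inner factor $m$ using the commutativity law: $xmy=x(my)\,\jc\,(my)x=myx$, and $myx=m(yx)$ has $yx$ as a right factor, so $I(xmy)=I(myx)\subseteq I(yx)=I(xy)$. Hence $xmy\in(S(xy)S]$, and since this is an ideal closed downward, $z\in(S(xy)S]$.

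From the identity everything needed reads off mechanically. Compatibility: if $a\,\jc\,b$, i.e. $I(a)=I(b)$, then $I(ca)=I(c)\cap I(a)=I(c)\cap I(b)=I(cb)$ and likewise $I(ac)=I(bc)$, so $\jc$ is a congruence. Idempotency comes from $I(a^2)=I(a)\cap I(a)=I(a)$, and commutativity is already shown. For completeness, suppose $a\le b$; since $I(b)$ is a downward closed ideal containing $b$ it contains $a$, so $I(a)\subseteq I(b)$ and therefore $I(ab)=I(a)\cap I(b)=I(a)$, giving $a\,\jc\,ab$. Thus $\jc$ is a complete semilattice congruence.

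Finally I would establish minimality. Let $\rho$ be any complete semilattice congruence and suppose $a\,\jc\,b$, so $a\le s_1bs_2$. Completeness of $\rho$ gives $a\,\rho\,a s_1 b s_2$, and in the semilattice $S/\rho$ the relation $\overline a=\overline a\,\overline{s_1}\,\overline b\,\overline{s_2}$ forces $\overline a\le\overline b$; the symmetric estimate from $b\le s_3as_4$ gives $\overline b\le\overline a$, whence $\overline a=\overline b$, i.e. $a\,\rho\,b$. Therefore $\jc\subseteq\rho$ for every such $\rho$, and since $\jc$ is itself a complete semilattice congruence it is the least one. I expect the reverse inclusion of the key identity to be the only genuinely delicate point; the relocation trick above, turning a ``stuck'' middle factor into an outer factor via $ab\,\jc\,ba$, is what makes it go through.
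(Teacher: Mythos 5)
Your proof is correct, but it is organized around a different key lemma than the paper's. The paper never proves your identity $I(xy)=I(x)\cap I(y)$; instead it verifies each required property of $\jc$ by direct inclusion-chasing: $a\,\jc\,a^2$ from $a\le a^2ta^2ta^3$, $ab\,\jc\,ba$ from $ab\,\jc\,abab$ with $abab\in S(ba)S$, and compatibility from $a\le ubv\Rightarrow ac\le ubvc$ followed by exactly your ``relocation'' move, $(SbvcS]=(SvcbS]\subseteq(ScbS]=(SbcS]$. So the core trick --- commuting an obstructing middle factor to the outside via $ab\,\jc\,ba$ --- is common to both arguments; what you add is the preliminary merge of the two bounds $z\le s_1xs_2$ and $z\le s_3ys_4$ into one via regularity ($z\le zwz$), a step the paper never needs because in its compatibility argument both occurrences of the inequality come from a single hypothesis. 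In exchange, your route establishes the strictly stronger statement that $a\mapsto I(a)$ turns multiplication into intersection of principal ideals, from which congruence, idempotency, commutativity and completeness ($a\le b\Rightarrow I(a)\subseteq I(b)\Rightarrow I(ab)=I(a)$) all fall out formally --- arguably cleaner and more reusable than the paper's case-by-case verification. The minimality arguments are essentially identical; your phrasing through the quotient semilattice $S/\rho$ and its natural order is a tidier rendering of the paper's explicit chain $axuxbyvy\;\xi\;axyuvb\;\xi\;buxuavyv$. One cosmetic point: you announce $a\in(Sa^2]\cap(a^2S]$ from Theorem \ref{cr10} as an ingredient but never actually use it; plain regularity and the defining inequality $a\le a^2ta^2$ suffice.
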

\begin{proof}
Let $a \in S$. Then $a \leq a^2 t a^2$ for some $t \in S$. This
implies  $a \leq a^2 ta^2 ta^3$. Thus $a \jc a^2$. Let $a, b \in S$,
then $ab \jc abab$ gives  that $$(SabS] = (SababS] \subseteq
(SbaS].$$ Interchanging the roles of $a$ and $b$ we get $(SbaS]
\subseteq (SabS]$. Thus $(SbaS] =(SabS]$ and so $ab \jc ba$.

Now let $a, b \in S$ be such that $a \jc b$ and $c \in S$. Then
there are $u, v, x, y \in S$ such that $$a \leq ubv \;\textrm{and}
\;b \leq xay.$$ Then $ac \leq ubvc$ implies that
\begin{align*}
(SacS] &\subseteq (SbvcS]\\
&=(SvcbS]\\
&\subseteq (ScbS]\\
&= (SbcS],
\end{align*}
and similarly $bc \leq xayc$ yields  that $(SbcS] \subseteq (SacS]$.
Hence $(SacS] = (SbcS]$ and so $ac \jc bc$. Thus $\jc $ is a
semilattice congruence on $S$.

Next  consider $a, b \in S $ such that $ a \leq b$, then $a^2 \leq
ab$. This   implies $(Sa^2S] \subseteq (SabS]$. So $(SaS]= (Sa^2S]
\subseteq (SabS] \subseteq (SaS]$. Thus $(SaS]= (SabS]$, that is, $a
\jc ab$.

To prove the minimality of $\jc$, as complete semilattice
congruence, consider a complete congruence $\xi$  on $S$, and
consider $a, b \in S$ such that $a \jc b$. Then $a \leq xby$ and $b
\leq uav$, for some $x, y, u, v \in S$. This  implies
\begin{align*}
a &\leq xby\\
  & \leq xuavy\\
  & \leq xuxbyvy
\end{align*}
and similarly  $b \leq uxuavyv$. Then $a \;\xi \;axuxbyvy
\;\textrm{and} \;b \;\xi \;buxuavyv $, by completeness of $\xi$.
Since $\xi$ is a complete semilattice congruence, it follows that
$$axuxbyvy \;\xi \;axyuvb \;\xi \;buxuavyv.$$
Hence $a \xi b$ and  thus $\jc$ is the least complete semilattice
congruence on $S$.
\end{proof}
\begin{Theorem}\label{cr15} (Clifford) An ordered semigroup $S$ is completely
regular if and only if it is a complete semilattice of  completely
simple ordered semigroups.
\end{Theorem}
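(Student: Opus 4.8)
The plan is to handle the easy implication by a one-line absorption argument and to reduce the hard implication to the least complete semilattice congruence furnished by Lemma~\ref{cr14}. Throughout I read \emph{completely simple} in the expected ordered sense, namely \emph{completely regular and ideal-simple} (no proper ideals), which is the analogue of the classical notion. For the converse, suppose $S=\bigcup_{\alpha\in Y}S_\alpha$ is a complete semilattice $Y$ of completely simple components. Given $a\in S$ we have $a\in S_\alpha$ for some $\alpha$, and since $S_\alpha$ is completely regular there is $s\in S_\alpha\subseteq S$ with $a\le a^2sa^2$; hence $a\in(a^2Sa^2]$ and $S$ is completely regular.

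For the forward direction, assume $S$ is completely regular. By Lemma~\ref{cr14}, $\jc$ is the least complete semilattice congruence on $S$, so writing $S_\alpha$ for the $\jc$-classes and $Y=S/\jc$, the family $\{S_\alpha\}_{\alpha\in Y}$ exhibits $S$ as a complete semilattice $Y$ of subsemigroups; each $S_\alpha$ is a subsemigroup because $\jc$ is a congruence with $a\;\jc\;a^2$. It remains to prove that every component $S_\alpha$ is completely simple. The single device I would use repeatedly is condition $(4)$ of the complete semilattice decomposition: if $a\in S_\alpha$ and $a\le c$ with $c\in S_\beta$, then $a\in S_\alpha\cap(S_\beta]$ forces $\alpha\preceq\beta$.

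First I would show each $S_\alpha$ is completely regular with witnesses inside $S_\alpha$. Fixing $a\in S_\alpha$, choose $t\in S_\gamma$ with $a\le a^2ta^2$. Then $a^2ta^2\in S_{\alpha\gamma}$, and $a\le a^2ta^2$ gives $\alpha\preceq\alpha\gamma$, i.e. $\alpha\gamma=\alpha$. Substituting $a\le a^2ta^2$ into itself yields $a\le a^2(ta^3t)a^2$, where $ta^3t\in S_{\gamma\alpha\gamma}=S_\alpha$; hence $a\in(a^2S_\alpha a^2]$ and $S_\alpha$ is completely regular. For simplicity, take $a,b\in S_\alpha$; since $a\;\jc\;b$ there are $x\in S_\mu$, $y\in S_\nu$ with $b\le xay$, so $xay\in S_{\mu\alpha\nu}$ and $b\le xay$ gives $\alpha\preceq\mu\alpha\nu$, whence $\alpha=\mu\nu\alpha$ and therefore $\mu\alpha=\alpha=\alpha\nu$. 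Using the internal witness $a\le a^2wa^2$ with $w\in S_\alpha$ from the previous step, I would rewrite $b\le xa^2wa^2y=(xa^2w)\,a\,(ay)$, where $xa^2w\in S_{\mu\alpha}=S_\alpha$ and $ay\in S_{\alpha\nu}=S_\alpha$. Thus $b\in(S_\alpha aS_\alpha]$, the ideal of $S_\alpha$ generated by $a$ is all of $S_\alpha$, and $S_\alpha$ has no proper ideal; so each $S_\alpha$ is completely simple.

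The hard part will be precisely the bookkeeping just described: the naive witnesses for complete regularity and for the relation $a\;\jc\;b$ live in $S$, not in the component $S_\alpha$, so a priori $b\le xay$ does not place $b$ in an ideal of $S_\alpha$. Condition $(4)$ is exactly what resolves this, by pinning the semilattice coordinate of every extraneous factor above $\alpha$ (giving $\mu\alpha=\alpha=\alpha\nu$ and $\alpha\gamma=\alpha$), so that the stray factors are absorbed the moment they are multiplied by $a$ or a power of $a$. Once this absorption principle is in hand, both the internal complete regularity and the internal simplicity of each component follow from the single chosen external witness, completing the proof.
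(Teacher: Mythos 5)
Your proposal is correct and follows essentially the same route as the paper: both directions hinge on Lemma~\ref{cr14}, decomposing $S$ into its $\jc$-classes and then verifying each class is completely simple, with your ``absorption via condition (4)'' bookkeeping being just a coordinate-wise restatement of the paper's argument that $(a)_{\jc}=(xby)_{\jc}=(xbyx)_{\jc}=(yuxby)_{\jc}$ places the stray factors back in the class. The only cosmetic divergence is that the paper gets complete regularity of each $\jc$-class for free by citing Theorem~\ref{cr13} (each $\hc$-class is group like and $\hc\subseteq\jc$), whereas you re-derive an internal witness $a\le a^2(ta^3t)a^2$ by direct substitution; both are sound.
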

\begin{proof}
Assume  that $S$ is a completely regular ordered semigroup. Then, by
Lemma \ref{cr14}, $\jc$ is the least complete semilattice congruence
on $S$ and so each $\jc$-class is a subsemigroup. Consider a
$\jc$-class $J$ and  $\;a, \;b \in J$. Then $a \;\jc \;b$, and so
there are $x, y \in S$ such that $a \leq xby$. Since $S$ is a
completely regular ordered semigroup, there is $u \in S$ such that
$a \leq a^2ua$, which implies that
\begin{equation}\label{eq1}
a \leq (xbyx) b (yuxby).
\end{equation}
Since $\jc$ is  complete semilattice  congruence on $S$, $a \leq
xby$ implies that
\begin{align*}
(a)_{\jc} &= (axby)_{\jc}\\
          & = (xby)_{\jc}\\
          &= (xbyx)_{\jc}
\end{align*}
and  from   (\ref{eq1}) we have
 $(a)_{\jc}= (yuxby)_{\jc}$. Thus
 $xbyx, \;yuxby \in J$ and hence  $J$ is a simple ordered semigroup.

Now to show that $J $ is completely regular ordered semigroup,
consider  $a \in \jc$. Since $\hc \subseteq \jc, \;(a)_{\hc}
\subseteq J$. Also $(a)_{\hc}$ is a group like ordered semigroup,
and so $a$ is completely regular element in $J$. Thus $J$ is a
completely simple semigroup and hence $S$ is a complete semilattice
of completely simple ordered semigroups.

The converse is obvious.
\end{proof}

\section{Clifford ordered semigroups}
Ordered  semigroups which are complete semilattices of group like
ordered semigroups are the analogue of Clifford semigroups. Though
it is not under the name Clifford ordered semigroups, but such
ordered semigroups have been studied extensively by  Kehayopulu
\cite{Ke MJ 92} and Cao \cite{CX2000} \cite{Cao2000}, specially
complete semilattice decomposition of such semigroups. Here we show
that a regular ordered semigroup $S$ is Clifford ordered semigroup
if and only if it is a complete semilattice of group like ordered
semigroups. Also a natural analogy between Clifford ordered
semigroup and Clifford semigroup has been given here. These supports
the terminology of Clifford ordered semigroup.

\begin{Definition}
Let $S$ be a regular ordered  semigroup. Then $S$ is called a
Clifford ordered semigroup \index{ordered  semigroup!Clifford}if for
all $a \in S$ and $e \in E_{\leq}(S)$ there are $u, v \in S$ such
that $ae \leq eua \;and \;ea \leq ave$.
\end{Definition}
As an immediate example of such semigroups we can consider  group
like ordered semigroups.

The following theorem states different equivalent conditions  for
the Clifford ordered semigroups.
\begin{Theorem}\label{cr26}
Let $S$ be a regular ordered  semigroup. Then the following
conditions are equivalent:
\begin{enumerate}
\item \vspace{-.4cm}
$S$ is Clifford;
\item \vspace{-.4cm}
$\lc = \rc$;
\item \vspace{-.4cm}
$(aS] = (Sa]$ for all $a \in S$;
\item \vspace{-.4cm}
$(eS] = (Se]$ for all $e \in E_\leq(S)$.
\end{enumerate}
\end{Theorem}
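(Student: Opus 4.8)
The plan is to run the cycle $(1)\Rightarrow(4)\Rightarrow(3)\Rightarrow(1)$, which already makes $(1),(3),(4)$ equivalent, and then to attach $(2)$ through the trivial implication $(3)\Rightarrow(2)$ together with a separate argument $(2)\Rightarrow(3)$. For $(1)\Rightarrow(4)$ I would simply feed an idempotent into the defining inequalities of a Clifford ordered semigroup: applying $ae\leq eua$ and $ea\leq ave$ with $a$ replaced by an arbitrary $s\in S$ and the fixed $e\in E_\leq(S)$ gives $se\leq eus\in eS$ and $es\leq sve\in Se$, so $Se\subseteq (eS]$ and $eS\subseteq (Se]$, whence $(eS]=(Se]$. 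For $(3)\Rightarrow(2)$ I would only use that on a regular ordered semigroup $L(a)=(Sa]$ and $R(a)=(aS]$; then $a\,\lc\,b\iff (Sa]=(Sb]\iff (aS]=(bS]\iff a\,\rc\,b$, applying $(3)$ to $a$ and to $b$.

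The two structural steps are $(4)\Rightarrow(3)$ and $(3)\Rightarrow(1)$. For $(4)\Rightarrow(3)$ the key observation is that $(4)$ makes each $(eS]$ \emph{two-sided}: since $(eS]=(Se]$, any $s\in S$ and $t\leq es'$ satisfy $st\leq ses'$ with $se\in Se\subseteq (eS]$, so $se\leq es''$ and $st\leq es''s'\in eS$, proving $(eS]$ is also a left ideal. Now for arbitrary $a$ choose $x$ with $a\leq axa$; a direct check gives $(aS]=(axS]$ and $(Sa]=(Sxa]$, while $ax,xa\in E_\leq(S)$. Hence $(aS]=(axS]=(axS]$ is a two-sided ideal containing $a$, so it contains $(Sa]$; dually $(Sa]=(Sxa]=(xaS]$ is a two-sided ideal containing $a$, so it contains $(aS]$. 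Therefore $(aS]=(Sa]$. For $(3)\Rightarrow(1)$ I would invoke the intersection formulas of Lemma \ref{BI13}. Given $a$ and $e\in E_\leq(S)$, condition $(3)$ applied to $a$ and to $e$ yields $ae\in aS\subseteq (Sa]$ and $ae\in Se\subseteq (eS]$, so by Lemma \ref{BI13}$(1)$, $ae\in (Sa]\cap (eS]=(eSa]$, i.e. $ae\leq eua$; symmetrically $ea\in (aS]\cap (Se]=(aSe]$ by Lemma \ref{BI13}$(2)$, i.e. $ea\leq ave$. This is exactly the Clifford condition.

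The main obstacle is the remaining implication $(2)\Rightarrow(3)$, where one must recover an ideal identity from the mere equality of two Green relations. Here I would first upgrade $(2)$ to complete regularity: from $a\leq axa$ one has $a\,\rc\,ax$, so $(2)$ gives $a\,\lc\,ax$, whence $ax\in (Sax]=(Sa]$ and $a\leq axa\leq s_0a^2$ for some $s_0$, that is $a\in (Sa^2]$; the dual argument gives $a\in (a^2S]$, and then Theorem \ref{cr10} shows that $S$ is completely regular. Complete regularity lets me apply Theorem \ref{cr13} and Lemma \ref{cr0.2}: every $a$ lies in a group like $\hc$-class and is $\hc$-related to an ordered idempotent $e$, so $(aS]=(eS]$ and $(Sa]=(Se]$, reducing $(3)$ to the single identity $(eS]=(Se]$ for ordered idempotents, which is precisely condition $(4)$.

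I expect this last reduction to be the crux: the relation $\lc=\rc$ only equates $\hc$-classes and does not by itself move an idempotent across a general element, so the naive element-wise manipulations (which keep producing factors ending in the wrong generator) do not close. To finish I would show that under $\lc=\rc$ any two $\jc$-related ordered idempotents are already $\hc$-related; since $\jc$ is the least complete semilattice congruence on a completely regular ordered semigroup (Lemma \ref{cr14}), this upgrades the decomposition of Theorem \ref{cr15} from completely simple components to group like components, i.e. exhibits $S$ as a complete semilattice of group like ordered semigroups, and then $(eS]=(Se]$ for each ordered idempotent $e$ follows from the group like structure of its component. That step — forcing the $\jc$-classes to coincide with the group like $\hc$-classes — is where I anticipate the real work.
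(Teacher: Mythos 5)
Your cycle $(1)\Rightarrow(4)\Rightarrow(3)\Rightarrow(1)$ together with $(3)\Rightarrow(2)$ is sound: feeding $s\in S$ into the Clifford inequalities gives $(eS]=(Se]$; the observation that condition $(4)$ forces each $(eS]$ and each $(Se]$ to be a two-sided ideal, combined with $(aS]=(axS]$ and $(Sa]=(Sxa]$ for $a\leq axa$, correctly yields $(3)$; and the use of Lemma \ref{BI13} to write $ae\in(Sa]\cap(eS]=(eSa]$ and $ea\in(aS]\cap(Se]=(aSe]$ recovers the Clifford condition. This routing is genuinely different from the paper, which instead runs $(1)\Rightarrow(2)\Rightarrow(3)\Leftrightarrow(4)\Rightarrow(1)$, and your $(3)\Rightarrow(1)$ via the intersection lemma is arguably cleaner than the paper's computation.

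The genuine gap is $(2)\Rightarrow(3)$, and you have flagged it yourself. You correctly derive complete regularity from $\lc=\rc$ and correctly reduce $(3)$ to the single identity $(eS]=(Se]$ for $e\in E_\leq(S)$ (every element is $\hc$-related to an ordered idempotent by Lemma \ref{cr0.2}), but the remaining step --- that $\lc=\rc$ forces $\jc$-related ordered idempotents to be $\hc$-related, so that the completely simple $\jc$-components of Theorem \ref{cr15} collapse to group like $\hc$-classes --- is only announced, not proved, and it is not clear it closes easily: in the ordered setting $\jc$ is not known to equal $\lc\circ\rc$, so the unordered argument ($\db=\lc\circ\rc=\hc$) does not transfer. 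The whole detour is unnecessary. The paper closes this implication in three lines: given $b\in(aS]$ with $b\leq as$, regularity of $as$ gives $as\leq as\,t\,as\,t\,as=x_1s$ where $x_1=astasta$; since $x_1\in asS$ and $as\in(x_1S]$ we get $as\;\rc\;x_1$, hence $as\;\lc\;x_1$ by $(2)$, so $as\leq zx_1=z\,astast\,a\in Sa$ and $b\in(Sa]$; the reverse inclusion is dual. Substituting this direct computation for your final paragraph (or specializing it to $a=e$ to prove $(2)\Rightarrow(4)$ after your reduction) completes the proof.
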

\begin{proof}
$(1)\Rightarrow(2):$ Consider $a, b \in S$ such that $a \lc b$. Then
there are $x, y \in S$ such that $a \leq xb \;\textrm{and} \;b\leq
ya$. Since $S$ is regular $a \leq aza \;\textrm{and} \; b \leq bwb$
for some $z,w \in S$. Then $a \leq xbwb$. Since  $bw \in E_\leq(S)$
and  $S$ is  Clifford,  we obtain that $xbw \in (bw S x]$, and hence
$a \in (bS]$. Similarly  $b \in (aS]$, which implies  $a \rc b$.
Therefore $\lc \subseteq \rc$.

$\rc \subseteq \lc$ follows dually. Hence $\lc= \rc$.

$(2)\Rightarrow (3):$  Let $a \in S$. Choose $b \in (aS]$.  Then
there is $s \in S$ such that $b\leq as$. The regularity  of $S$
yields that $as \leq ast ast as= x_1 s$ where $x_1= ast ast a$. Then
$as \rc x_1$ implies  $as \lc x_1$, by condition (2). Then there is
$z \in S$ such that $as \leq zx_1$. Therefore $b \leq as \leq zx_1=
zast ast a$, so $b \in (Sa]$. Hence  $(aS] \subseteq (Sa]$.

$(Sa] \subseteq (aS]$ follows dually. Hence $(aS] =(Sa]$.

$(3)\Leftrightarrow (4):$    First suppose that condition (4) holds
in $S$. Consider $a\in S$. Let $z\in (aS]$ then there is $t \in S$
such that $z\leq at$. Since $S$ is regular there is $x \in S$ such
that $a \leq axa$. Clearly $xa \in E_\leq(S)$. Then  $z \leq at$
implies that $a \leq axat$. Since $xa \in E_\leq(S)$ we have $xat\in
(Sxa]$, by condition (4). Therefore $z \in (Sa]$ and so $(aS]
\subseteq (Sa]$.

$(Sa] \subseteq (aS]$ follows dually. Hence $(aS] = (Sa]$.

Converse is obvious.

$(4)\Rightarrow (1):$  Let $a \in S$ and $e \in E_\leq(S)$. Since
$S$ is regular there is $x \in V_\leq(ae)$ such that $ae\leq ae
xae\leq aee xae$. Now there  are $t \in S$ such that $ae\leq et$, by
 condition (4). Also $exa \leq exaexa$, that is, $exa \in E_\leq(S)$.
Then for $exa \in E_\leq(S)$ and $e \in S$ there is $s\in S$ such
that $exae \leq sexa$, using condition (4).  Hence $ae \leq etsxa= e
za$, where $z= tsx$. Similarly $ea \leq awe$ for some $w \in S$.
Hence $S$ is Clifford.
\end{proof}

The underlying spirit of these results can be realized from the
following theorem.
\begin{Theorem}\label{cr27}
Let $S$ be a regular ordered  semigroup. Then $S$ is Clifford if and
only if for all $ a, b \in S$ there is $x \in S$  such that $ab \leq
bxa$, i.e. $S$ is $\hc$-commutative.
\end{Theorem}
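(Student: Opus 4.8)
The plan is to prove the two implications separately. The reverse implication (that $\hc$-commutativity, together with the standing regularity hypothesis, forces $S$ to be Clifford) is essentially immediate from the definitions, while the forward implication requires the localization identities supplied by Theorem \ref{cr26}.

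For the easy direction I would argue as follows. Assume $S$ is $\hc$-commutative and fix $a \in S$ and $e \in E_\leq(S)$. Applying the $\hc$-commutativity to the ordered pair $(a,e)$ gives $ae \leq eua$ for some $u \in S$, and applying it to the pair $(e,a)$ gives $ea \leq ave$ for some $v \in S$. These are exactly the two inequalities in the definition of a Clifford ordered semigroup, and since $S$ is regular by hypothesis, $S$ is Clifford.

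For the substantive direction I would exploit condition (3) of Theorem \ref{cr26}, namely that a Clifford ordered semigroup satisfies $(cS]=(Sc]$ for every $c \in S$. Fix $a,b \in S$. The key observation is that $ab$ lies in two ``wrong-sided'' one-sided sets simultaneously: since $ab = a\cdot b \in Sb$ we have $ab \in (Sb] = (bS]$, whence $ab \leq bt$ for some $t \in S$; and since $ab = a\cdot b \in aS$ we have $ab \in (aS] = (Sa]$, whence $ab \leq qa$ for some $q \in S$. Now regularity of the element $ab$ gives $p \in S$ with $ab \leq ab\,p\,ab$. Substituting the first inequality into the leftmost factor and the second into the rightmost factor of this product yields
\[
ab \leq ab\,p\,ab \leq (bt)\,p\,(qa) = b(tpq)a,
\]
so that $ab \leq bxa$ with $x = tpq$, which is precisely $\hc$-commutativity.

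The crux of the argument is this forward direction, and specifically the idea of expressing $ab$ at once as an element of $(bS]$ and of $(Sa]$ and then gluing these through the regularity factorization $ab \leq ab\,p\,ab$; the symmetry $(cS]=(Sc]$ from Theorem \ref{cr26} is exactly what converts the memberships $ab \in (Sb]$ and $ab \in (aS]$ into the usable forms $ab \leq bt$ and $ab \leq qa$. I expect no difficulty beyond checking that the order compatibility of the multiplication justifies the two substitutions in the displayed chain.
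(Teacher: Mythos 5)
Your proof is correct. The reverse direction is exactly the paper's (the definition of Clifford is the special case of $\hc$-commutativity where one factor is an ordered idempotent), and your forward direction is sound: the memberships $ab\in Sb\subseteq (Sb]=(bS]$ and $ab\in aS\subseteq (aS]=(Sa]$ legitimately yield $ab\leq bt$ and $ab\leq qa$, and the chain $ab\leq ab\,p\,ab\leq bt\,p\,ab\leq bt\,p\,qa=b(tpq)a$ is justified by compatibility of the order with multiplication. The route differs from the paper's in the mechanism used to ``flip'' the two outer occurrences of $ab$ in the regularity sandwich. The paper works directly from the definition of a Clifford ordered semigroup: it takes $x,y$ with $a\leq axa$ and $b\leq byb$, observes that $by$ and $xa$ are ordered idempotents, refines $ab\leq abzab$ to $ab\leq (aby)(bza)(xab)$, and then applies the defining inequalities $ae\leq eua$, $ea\leq ave$ to those idempotents to move $b$ to the front and $a$ to the back. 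You instead invoke condition (3) of Theorem \ref{cr26}, the ideal equality $(cS]=(Sc]$, which packages that manipulation once and for all. Your version is shorter and avoids handling the ordered idempotents explicitly, at the price of resting on the implication $(1)\Rightarrow(3)$ of Theorem \ref{cr26}; the paper's version is self-contained modulo the definition. Since Theorem \ref{cr26} is established immediately before this theorem in the paper, your dependence on it is unobjectionable.
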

\begin{proof}
Let   $ S$ be Clifford and  $a, b \in S $. Since $S$ is regular, so
there are $x, y, z \in S$ such that
$$a \leq axa, \;b \leq byb \;\textrm{and} \;ab \leq abzab.$$
This implies $by \;\textrm{and}  \;xa$ are ordered  idempotents and
hence there are $u, v \in S$ such that $$aby \leq byua \;\textrm{
and} \;xab \leq bvxa.$$ Now  $ab \leq abzab \leq aby bza xab$
implies $ab \leq b(yua bza bvx)a$.

Converse follows directly.
\end{proof}
Another application of this theorem is that every Clifford ordered
semigroup is completely regular. For, consider a Clifford ordered
semigroup $S$ and $a \in S$. Then there is  $x \in S$ such that $a
\leq axa xa xa$. Also there are $u, v \in S$ such that $$xa \leq aux
\;\textrm{and} \;ax \leq xva,$$ which again implies that $ a \leq
a^2ux^3 va^2$, and hence   $a \in  (a^2 S a^2]$. Thus $S$ is a
completely regular ordered semigroup.  But the converse is not true
in general. The condition for which a completely regular ordered
semigroup becomes a Clifford ordered semigroup has been given in the
following theorem.
\begin{Theorem}\label{cr29}
An  ordered semigroup $S$ is a  Clifford ordered semigroup if and
only if  $S$ is completely regular ordered semigroup and  for all
$e, f \in E_\leq (S), \;eSf \subseteq (f S e]$.
\end{Theorem}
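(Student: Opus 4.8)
The plan is to prove the two implications separately, leaning on the $\hc$-commutativity characterization of Clifford ordered semigroups (Theorem \ref{cr27}), the intersection identities of Lemma \ref{BI13}, and the explicit idempotents produced in Lemma \ref{cr0.1}. Throughout I use that every completely regular ordered semigroup (in particular every Clifford one) is regular, so all the cited results apply.

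For the forward implication, suppose $S$ is Clifford. That $S$ is completely regular is already recorded in the paragraph following Theorem \ref{cr27}, so only the inclusion $eSf \subseteq (fSe]$ for $e,f \in E_\leq(S)$ remains. Since $S$ is regular and Clifford, Theorem \ref{cr26} gives $(gS] = (Sg]$ for every $g \in E_\leq(S)$; in particular $(eS] = (Se]$ and $(fS] = (Sf]$. I would then apply Lemma \ref{BI13}(3) twice: first $(eSf] = (Sf] \cap (eS]$, then substituting the two equalities gives $(Sf] \cap (eS] = (fS] \cap (Se] = (Se] \cap (fS]$, and Lemma \ref{BI13}(3) with the roles of $e$ and $f$ interchanged gives $(Se] \cap (fS] = (fSe]$. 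Hence $(eSf] = (fSe]$, and in particular $eSf \subseteq (fSe]$.

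For the converse, suppose $S$ is completely regular and $eSf \subseteq (fSe]$ for all $e,f \in E_\leq(S)$. By Theorem \ref{cr27} it suffices to show $S$ is $\hc$-commutative, i.e. that for all $a,b \in S$ there is $x \in S$ with $ab \leq bxa$. Fix $a,b$ and choose $t,s \in S$ with $a \leq a^2ta^2$ and $b \leq b^2sb^2$. Following Lemma \ref{cr0.1} I set $e = a^2ta^2ta^2$ and $f = b^2sb^2sb^2$; these lie in $E_\leq(S)$ and satisfy $a \leq ea$ and $b \leq bf$. Then $ab \leq (ea)(bf) = e(ab)f \in eSf \subseteq (fSe]$, so $ab \leq fwe$ for some $w \in S$.

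The step that closes the argument is reading off the shape of $e$ and $f$: writing $f = b\,(bsb^2sb^2)$ exhibits $f$ as beginning with $b$, and writing $e = (a^2ta^2ta)\,a$ exhibits $e$ as ending with $a$, so that $fwe = b\,\bigl[(bsb^2sb^2)\,w\,(a^2ta^2ta)\bigr]\,a = bxa$ with $x \in S$. Thus $ab \leq bxa$, $S$ is $\hc$-commutative, and Theorem \ref{cr27} yields that $S$ is Clifford. The only delicate point is precisely this bookkeeping: one must choose idempotents associated to $a$ and $b$ that visibly carry an $a$ on their right end and a $b$ on their left end, and the explicit forms from Lemma \ref{cr0.1} are tailored to provide exactly that, so that the conclusion $ab \leq fwe$ can be factored into the required form $bxa$.
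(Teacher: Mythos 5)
Your argument is correct, but it follows a genuinely different route from the paper's in both directions. For the direction ``Clifford $\Rightarrow$ conditions'' the paper simply declares the converse obvious, whereas you derive the stronger equality $(eSf] = (fSe]$ by combining Theorem \ref{cr26}(4) with two applications of Lemma \ref{BI13}(3); this is valid (Clifford ordered semigroups are regular by definition, so both cited results apply) and actually yields more than the stated inclusion. For the substantive direction ``conditions $\Rightarrow$ Clifford'' the paper verifies the definition directly: for $a\in S$ and $e\in E_\leq(S)$ it uses Lemma \ref{cr10.1} to produce the ordered idempotents $xa^2$ and $a^2x$ and applies the hypothesis twice to force $ae\in (eSa]$ and $ea\in (aSe]$. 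You instead establish full $\hc$-commutativity for an arbitrary pair $a,b$: you flank $ab$ by the explicit idempotents $e=a^2ta^2ta^2$ and $f=b^2sb^2sb^2$ of Lemma \ref{cr0.1}, apply the hypothesis once to $e(ab)f$, and factor the resulting bound $fwe$ as $bxa$ by reading off that $f$ begins with $b$ and $e$ ends with $a$; Theorem \ref{cr27} (whose proof precedes and does not depend on Theorem \ref{cr29}, so there is no circularity) then gives the Clifford property. Both proofs hinge on attaching explicit ordered idempotents to elements and invoking the exchange hypothesis; yours trades the paper's two applications of the hypothesis for a single one plus the detour through $\hc$-commutativity, and the ``bookkeeping'' you single out---choosing idempotents that visibly carry $b$ on the left and $a$ on the right---is indeed the one place where the specific form from Lemma \ref{cr0.1} is essential. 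I see no gap.
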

\begin{proof}
First suppose  that the  given conditions hold in  $S$. Let $a \in S
\;\textrm{and} \;e \in E_\leq(S)$. Since $S$ is completely regular
there is $x \in S$ such that $a \leq a^2xa \;\textrm{and} \;a \leq
axa^2$, by Lemma \ref{cr10.1}. Then $xa^2, a^2x \in E_\leq (S)$. Now
$ae \leq axa^2e$. Since $xa^2, e \in E_\leq (S)$ there is $s_1 \in
S$ such that $xa^2e \leq es_1xa^2$, by given condition. Therefore
$ae \leq aes_1 xa^2$. Also $aes_1xa^2 \leq a^2xa es_1xa^2$. Since
$a^2x, e \in E_\leq (S)$, by given condition it follows that $a^2xae
\leq es_2a^2x$ for some $s_2 \in S$. Thus $ae \leq aes_1 xa^2$
implies $ae \leq es_2a^2x s_1xa^2$, that is $ae \in (eSa]$.
Similarly $ea \in (aSe]$. Hence $S$ is a Clifford ordered semigroup.

Converse is obvious.
\end{proof}

\begin{Theorem}
An ordered semigroup $S$ is Clifford if and only if it is
completely regular and inverse.
\end{Theorem}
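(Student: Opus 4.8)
The plan is to deduce the equivalence from the two characterizations already in hand: Theorem~\ref{cr29}, which says that $S$ is Clifford if and only if it is completely regular and $eSf\subseteq(fSe]$ for all $e,f\in E_\leq(S)$, and Theorem~\ref{cr6}, which (for regular $S$) recasts the inverse condition $a'\hc a''$ as the purely idempotent statement: for all $e,f\in E_\leq(S)$ there is $x\in S$ with $ef\le fxe$, i.e. $ef\in(fSe]$. These two results let me trade the hypotheses of the theorem for conditions on products of ordered idempotents.

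For the direction ``Clifford $\Rightarrow$ completely regular and inverse'', complete regularity is exactly the observation recorded just after Theorem~\ref{cr27}. For the inverse part I would simply feed an ordered idempotent into the Clifford inequality: applying $ea\le ave$ with $a:=f\in E_\leq(S)$ yields $ef\le fve$, which is precisely the criterion of Theorem~\ref{cr6}; hence $S$ is inverse. This half is immediate once the right substitution is spotted.

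The substantial direction is ``completely regular and inverse $\Rightarrow$ Clifford''. By Theorem~\ref{cr29} it suffices to promote the idempotent-product inclusion $ef\in(fSe]$ (which inverse supplies, via Theorem~\ref{cr6}) to the full inclusion $esf\in(fSe]$ for an arbitrary middle factor $s\in S$. My plan here is structural. Since $S$ is completely regular, each $\hc$-class is a group like (i.e. $t$-simple) ordered subsemigroup (Lemma~\ref{cr12}, Theorem~\ref{cr13}), and $\jc$ is the least complete semilattice congruence with $ab\jc ba$ for all $a,b$ (Lemma~\ref{cr14}); in particular $esf\jc fse$. I would use the idempotent $\hc$-commutativity furnished by inverse to show that $esf$ and $fse$ in fact lie in the same $\hc$-class $H$. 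Granting $esf\hc fse$, the element $fse$ lies in $H\cap fSe$, so Theorem~\ref{cr3}(1) applied inside the group like semigroup $H$ produces $m$ with $esf\le(fse)\,m\,(fse)=f(se\,m\,fs)e\in fSe$, that is $esf\in(fSe]$; Theorem~\ref{cr29} then gives Clifford, and combined with the easy direction this closes the equivalence.

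The main obstacle is exactly the claim $esf\hc fse$: upgrading the automatic $\jc$-equivalence to genuine $\hc$-equivalence using only that ordered idempotents $\hc$-commute. Concretely, each $\jc$-class is completely simple (Theorem~\ref{cr15}), and by Theorem~\ref{cr5} such a class is group like precisely when all of its ordered idempotents are $\hc$-related; so the crux is to prove that two ordered idempotents lying in a common $\jc$-class are forced to be $\hc$-related once they $\hc$-commute, collapsing each completely simple component to a single $t$-simple class. I expect this to be the one step requiring real care, as naive commuting of the outer idempotents past $s$ only returns $esf\le g\alpha(esf)\beta g$-type circular bounds; the order-theoretic extraction via Theorem~\ref{cr3}(1) is what avoids this. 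As a cross-check, the same conclusion is reachable through Theorem~\ref{cr27}: it is enough to prove $ab\le bxa$, and the identical group like argument applied to the pair $ab\jc ba$ gives $ab\le(ba)\,m\,(ba)=b(amb)a$, so $\hc$-commutativity --- hence Clifford --- follows.
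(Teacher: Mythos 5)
Your easy direction is fine and essentially matches the paper: substituting an ordered idempotent $f$ for $a$ in the Clifford inequality gives $ef\le fve$, and Theorem~\ref{cr6} converts this into the inverse property. The problem is the hard direction. Your entire argument funnels through the claim $esf\,\hc\,fse$ (equivalently, that each completely simple $\jc$-component of a completely regular inverse ordered semigroup collapses to a single $\hc$-class), and you never prove it --- you explicitly write ``Granting $esf\hc fse$'' and later identify this as ``the one step requiring real care.'' That step \emph{is} the theorem: once you know $ab\,\hc\,ba$ for all $a,b$, the group like structure of the $\hc$-class hands you $ab\le b(amb)a$, i.e.\ $\hc$-commutativity, which by Theorem~\ref{cr27} is already equivalent to Clifford (given regularity). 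Your ``cross-check'' via Theorem~\ref{cr27} makes this circularity visible: it again assumes $ab\,\hc\,ba$, which for a completely regular ordered semigroup is essentially the conclusion being sought, not a consequence of $\jc$-commutativity plus idempotent $\hc$-commutation that can be waved through. So the proposal is a reduction of the theorem to an unproven (and nontrivial) lemma, not a proof.

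For contrast, the paper avoids the structural detour entirely and works pointwise: given $e,f\in E_\leq(S)$ and $x\in S$, it uses complete regularity to pick $z\in V_\leq(exf)$ with $exf\le exfzexf$, $exf\le zexfexf$, $exf\le exfexfz$, then verifies by direct computation that $fze$, $exfz^2$ and $z^2exf$ are all ordered inverses of $exf$. The inverse hypothesis forces these inverses to be $\hc$-related, which yields $exfz^2\le fzet$ and a dual inequality, and substituting back gives $exf\le fve$, i.e.\ $eSf\subseteq(fSe]$; Theorem~\ref{cr29} then finishes. If you want to salvage your structural route, you would need to actually prove that any two ordered idempotents in the same $\jc$-class of a completely regular inverse ordered semigroup are $\hc$-related (so that Theorem~\ref{cr5} collapses each completely simple component to a group like one); the paper's manufacture of several explicit inverses of a single element is precisely the device that substitutes for that missing lemma.
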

\begin{proof}
First suppose that $S$ is both completely regular and inverse. Consider $e, f
\in S$. Consider $exf$ for some $x \in S$. Since $S$ is completely
regular there is  $z \in V_\leq(exf)$ such that  $exf \leq exf z
exf, \;exf \leq zexfexf$ and $exf \leq exfexf z$, by Lemma
\ref{cr12}. Now $fze \leq fzexfze \leq fze(exf)fze$ and $exf \leq
exf z exf \leq exf(fze)exf$. This shows that $fze \in V_\leq(exf)$.
Also $exf \leq exf z exf \leq exf (exf z^2) exf$ and $exfz^2 \leq
exf z exf z^2 \leq exfz zexf exf z^2=exfz^2 (exf) exfz^2$, which
implies that $exfz^2 \in V_\leq(exf)$. Similarly $z^2exf\in
V_\leq(exf)$.

Since  $exfz^2, fze \in V_\leq(exf)$ by given condition we have
$exfz^2\leq fzet $ for some $t \in S$. Thus $exf \leq exf z exf \leq
exf z^2 (exf)^2$ implies that $exf \leq fzet(exf)^2= fsexf$, where
$s=zetexf$. Similarly for $z^2exf, fze\in V_\leq(exf)$ we have $exf
\leq exf ufze$. Therefore $exf \leq fve$ for some $v \in S$. So by
Theorem \ref{cr29} we have $S$ is Clifford.

Conversely, assume that $S$ is Clifford ordered semigroup. Clearly
$S$ is a completely regular ordered semigroup. Let $e,f \in
E_\leq(S)$. Then $ef \leq eef$. Since $S$ is Clifford we have $eef
\in (fSe]$, by Theorem \ref{cr29}. So $ef \in (fSe]$. Hence the
given condition follows from Theorem \ref{cr6}.
\end{proof}

\begin{Theorem}\label{cr28}
Let $S$ be an ordered semigroup. Then $S$ is a Clifford ordered
semigroup if and only if it is a complete semilattice of  group like
ordered semigroups.
\end{Theorem}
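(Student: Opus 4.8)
The plan is to prove both implications by taking the relation $\jc$ as the candidate complete semilattice congruence and identifying its classes with group like ordered semigroups.

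For the forward direction, suppose $S$ is Clifford. As already observed following Theorem \ref{cr27}, $S$ is then completely regular, so by Lemma \ref{cr14} the relation $\jc$ is the least complete semilattice congruence on $S$; this is the congruence I would use for the decomposition. It remains to show that every $\jc$-class is a group like ordered semigroup, and for this I would prove that $\jc = \hc$. The inclusion $\hc \subseteq \jc$ is automatic. For the reverse inclusion I would exploit Theorem \ref{cr26}: since $S$ is Clifford we have $(aS] = (Sa]$ for all $a$, from which one checks $(SaS] = (Sa] = (aS]$, i.e. $I(a) = L(a) = R(a)$. Hence $\jc = \lc = \rc$, and because $\lc = \rc$ forces $\hc = \lc \cap \rc = \lc$, we conclude $\jc = \hc$. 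Each $\jc$-class is therefore an $\hc$-class, which is a group like ordered semigroup by Theorem \ref{cr13}. Thus $S$ is a complete semilattice of group like ordered semigroups.

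For the converse, assume $S = \bigcup_{\alpha \in Y} S_\alpha$ is a complete semilattice $Y$ of group like ordered subsemigroups. First $S$ is regular, since every group like ordered semigroup is regular and each $a \in S_\alpha$ satisfies $a \leq a x a$ with $x \in S_\alpha \subseteq S$. To obtain that $S$ is Clifford I would verify the $\hc$-commutativity condition of Theorem \ref{cr27}. Given $a \in S_\alpha$ and $b \in S_\beta$, condition $(3)$ of the complete semilattice decomposition places both $ab$ and $ba$ in the single component $S_{\alpha\beta}$, which is group like. Applying Theorem \ref{cr3}$(1)$ inside $S_{\alpha\beta}$ to the pair $ab, ba$ yields some $s \in S_{\alpha\beta}$ with $ab \leq (ba)\, s\, (ba) = b (asb) a$, so that $ab \leq b x a$ with $x = asb \in S$. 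Hence $S$ is $\hc$-commutative and, being regular, is Clifford by Theorem \ref{cr27}.

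The routine parts are the computations with downsets $(\,\cdot\,]$ and the verification that $(SaS] = (Sa]$. The step that requires the most care, and which is the crux of the argument, is the identification $\jc = \hc$ in the forward direction: everything hinges on converting the two-sided condition $(aS] = (Sa]$ into the collapse of the $\jc$-, $\lc$-, and $\rc$-relations. In the converse the key idea is simply that $ab$ and $ba$ land in the \emph{same} group like component, after which Theorem \ref{cr3}$(1)$ does all the work.
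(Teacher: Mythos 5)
Your proposal is correct and follows essentially the same route as the paper: the forward direction rests on Lemma \ref{cr14}, Theorem \ref{cr13} and the identification $\jc=\hc$ (which you justify more explicitly than the paper does, via $I(a)=L(a)=R(a)$), and the converse places $ab$ and $ba$ in a common group like component and concludes $\hc$-commutativity, hence Clifford by Theorem \ref{cr27}. The only cosmetic difference is that you invoke Theorem \ref{cr3}(1) directly where the paper combines three inequalities by hand.
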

\begin{proof}
Let $S$ be a Clifford ordered semigroup. Then $S$ is completely
regular and hence $\jc$ is the least complete semilattice congruence
on $S$ and each  $\hc$-class is a group like ordered semigroup, by
Theorem \ref{cr14} and Theorem \ref{cr13}. Now $\lc = \rc$ implies
that $\jc = \hc$ and so $S$ is a complete semilattice of group like
ordered semigroups.

Conversely, suppose that  $S$ is  the complete semilattice  $Y$ of
group like ordered semigroups $\{G_{\alpha}\}_{\alpha \in Y}$.
Consider $a, b \in S$. Then $ab, ba \in G_{\alpha}$ for some $\alpha
\in Y$, and so there are $x, y, z \in G_{\alpha}$ such that $$ab
\leq xba, \;ab \leq bay, \;\textrm{and} \;ab \leq abzab,$$ which
together implies that $ab \leq bay zxba$. Thus $S$ is Clifford
ordered semigroup.
\end{proof}

\subsection{Left Clifford ordered semigroups} In this section we
introduce left Clifford ordered semigroups which are of course a
generalization of Clifford ordered semigroups. Here we show that a
left Clifford ordered semigroup is  a complete semilattice of left
group like ordered semigroups.
\begin{Definition}
A regular ordered  semigroup $S$ is called a left Clifford
\index{ordered  semigroup!left Clifford} ordered semigroup if for
all $a \in S$,  $(aS] \subseteq (Sa]$.
\end{Definition}
Every left group like ordered semigroup is a left Clifford ordered
semigroup.
\begin{Theorem}\label{cr30}
Let $S$ be a regular ordered  semigroup. Then the following
conditions are equivalent:
\begin{enumerate}
\item \vspace{-.4cm}
$S$ is a left Clifford ordered semigroup;
\item \vspace{-.4cm}
for all $e \in E_{\leq}(S), \;(eS] \subseteq (Se]$;
\item \vspace{-.4cm}
for all $a \in S, \;and \;e \in E_{\leq}(S)$ there is $x\in S$ such
that $ea \leq xe$;
\item \vspace{-.4cm}
for all $a, b \in S$ there is $x\in S$ such that $ab \leq xa$;
\item \vspace{-.4cm}
$\rc \subseteq \lc$ on $S$.
\end{enumerate}
\end{Theorem}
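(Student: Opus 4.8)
The plan is to establish the cyclic chain of implications $(1)\Rightarrow(2)\Rightarrow(3)\Rightarrow(4)\Rightarrow(5)\Rightarrow(1)$, exactly mirroring the proof of Theorem \ref{cr26} but with the two-sided equalities $(aS]=(Sa]$ and $\lc=\rc$ everywhere relaxed to the one-sided containments $(aS]\subseteq(Sa]$ and $\rc\subseteq\lc$. The implication $(1)\Rightarrow(2)$ is immediate, since $(2)$ is simply $(1)$ restricted to ordered idempotents. For $(2)\Rightarrow(3)$, fix $e\in E_{\leq}(S)$ and $a\in S$; then $ea\in eS\subseteq(eS]$, so condition $(2)$ forces $ea\in(Se]$, that is $ea\leq xe$ for some $x\in S$, which is exactly $(3)$.

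For the middle steps I would lean on regularity. To prove $(3)\Rightarrow(4)$, given $a,b\in S$ choose $z\in S$ with $a\leq aza$; multiplying this on the left by $z$ gives $za\leq(za)^2$, so $za\in E_{\leq}(S)$. Applying $(3)$ to the ordered idempotent $za$ and the element $b$ yields $x_1$ with $(za)b\leq x_1(za)$, whence $ab\leq a(zab)\leq ax_1za=(ax_1z)a$, which is $(4)$. For $(4)\Rightarrow(5)$, suppose $a\rc b$; since $S$ is regular, $R(a)=(aS]$ and $R(b)=(bS]$, so $a\leq bs$ and $b\leq at$ for suitable $s,t$. Condition $(4)$ gives $bs\leq xb$, so $a\in(Sb]$, and since $(Sb]$ is a left ideal this yields $(Sa]\subseteq(Sb]$; the symmetric argument gives the reverse inclusion, so $a\lc b$, i.e. $\rc\subseteq\lc$.

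The main obstacle is $(5)\Rightarrow(1)$, where the purely order-theoretic hypothesis must be converted back into the containment $(aS]\subseteq(Sa]$; the $(2)\Rightarrow(3)$ step of Theorem \ref{cr26} is the template. Given $b\in(aS]$, write $b\leq as$ and use regularity of $as$ to obtain $w$ with $as\leq(as)w(as)$. Setting $x_1=aswa$, I would check that $as\leq x_1s$ while $x_1=as\,wa\in asS$, so that $(asS]=(x_1S]$, i.e. $as\rc x_1$. Condition $(5)$ then upgrades this to $as\lc x_1$, producing $z$ with $as\leq zx_1=(zasw)a\in(Sa]$; since $(Sa]$ is down-closed and $b\leq as$, this forces $b\in(Sa]$, establishing $(aS]\subseteq(Sa]$. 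The only genuinely delicate point is arranging the regularity expansion so that the auxiliary element $x_1$ is simultaneously $\rc$-related to $as$ and carries a trailing factor of $a$; everything else is routine manipulation of the operator $(\,\cdot\,]$ together with the standing facts that $(Sc]$ is a left ideal and $(cS]$ a right ideal.
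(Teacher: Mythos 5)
Your proposal is correct and follows essentially the same route as the paper: the same cyclic chain $(1)\Rightarrow(2)\Rightarrow(3)\Rightarrow(4)\Rightarrow(5)\Rightarrow(1)$, with the same use of the ordered idempotent $za$ (the paper's $xa$) in $(3)\Rightarrow(4)$ and the same auxiliary element $x_1=aswa$ (the paper's $asta$) to convert $\rc$ into $\lc$ in $(5)\Rightarrow(1)$. No gaps.
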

\begin{proof}
$(1)\Rightarrow (2)$ and $(2)\Rightarrow (3)$ are trivial.

$(3)\Rightarrow (4):$ Let $a, b \in S$. Since $S$ is regular, so
there is $x \in S$ such that $a \leq axa$. Then  $xa \in
E_{\leq}(S)$, and whence $xab \leq uxa$ for some $u \in S$. So $$ab
\leq axab \leq (aux)a.$$

$(4)\Rightarrow (5):$ Let $a, b \in S$ be such that $a \rc b$. Then
 there are $s, t \in S$ such that $$a \leq bs \;\textrm{and} \;b \leq at.$$
 Also $$bs \leq xb \;\textrm{and }\;at \leq ya  \;\textrm{for some} \;x, y \in S.$$
This implies  $$a \leq xb \;\textrm{and} \;b \leq ya.$$ Thus $a \lc
b$, and hence $\rc \subseteq \lc$ on $S$.

$(5)\Rightarrow (1):$ Let $a \in S$ and $u \in (aS]$. Then there is
$s \in S$ such that $u \leq as$. Since $S$ is regular,  $as \leq as
t as$ for some $t \in S$. Then $as \rc asta$ and so $as \lc asta$.
This implies $as \leq xasta $ for some $x\in S$, so $u \leq xasta$.
Then $u \in (Sa]$ and hence $(aS] \subseteq  (Sa]$.
\end{proof}

\begin{Theorem}\label{cr34}
An ordered semigroup $S$ is a left Clifford ordered semigroup if and
only if the following conditions hold in $S$:
\begin{itemize}
\item  [(i)] $a \in (a S a^2]$  for every $a \in S$,
\item[(ii)]
$ef \in (ef S fe]$ for every $e, f \in E_\leq (S)$.
\end{itemize}
\end{Theorem}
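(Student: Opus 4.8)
The plan is to read off both implications from the list of equivalent conditions in Theorem~\ref{cr30}. It is convenient to record first that (i) is equivalent to ``$S$ is regular and left regular'' (i.e. $a\in(Sa^2]$): indeed $a\le asa^2=a(sa)a$ gives $a\in(aSa]$, so (i) already forces regularity, while conversely regularity $a\le aua$ together with $a\le va^2$ yields $a\le(au)a\le(au)(va^2)=a(uv)a^2\in(aSa^2]$. In particular Theorem~\ref{cr30} is available throughout.

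For the forward direction assume $S$ is left Clifford. For (i) it suffices, by the previous remark, to check left regularity: from $a\le aua$ and $au\in(aS]\subseteq(Sa]$ we get $au\le va$, hence $a\le(au)a\le(va)a=va^2$. For (ii) I would apply (i) to the element $ef$ to obtain $ef\le ef\,s\,(ef)^2=ef\,s\,efef$, and then shrink the tail: condition~(4) of Theorem~\ref{cr30} applied to $(efe,f)$ gives $efef=(efe)f\le x\,efe$, so $ef\le ef\,sx\,efe$. A short idempotent computation gives $efe\le ef^2e\le ef^3e=ef\cdot f\cdot fe$ (using $f\le f^2$ twice), and substituting yields $ef\le ef\,(sx\,eff)\,fe$, i.e. $ef\in(efSfe]$. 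This direction goes through cleanly.

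For the converse assume (i) and (ii). Regularity comes from (i), so Theorem~\ref{cr30} applies and it suffices to verify any one of its equivalent conditions; I would aim at condition~(3), namely $eb\in(Se]$ for every $e\in E_\le(S)$ and $b\in S$, since $(3)\Rightarrow(1)$ is already contained in Theorem~\ref{cr30} (the step $(3)\Rightarrow(4)$ there uses the idempotent $xa$ arising from $a\le axa$ to absorb the second factor). To set up condition~(3) I would use regularity of $b$ to produce a left-identity idempotent $f=by$ with $b\le fb$, so that $eb\le efb$, and then feed the pair $(e,f)$ into hypothesis~(ii), which supplies $ef\le ef\,m\,fe$.

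The hard part will be closing this last estimate. The naive substitution only gives $eb\le efb\le(efmf)\,eb$, a circular bound that does not terminate, and the same circularity defeats a direct attack on condition~(3) for the specific idempotent $xa$. Breaking it is the heart of the matter: I expect one must use the full strength of (ii), namely $ef\in(efSfe]$ rather than merely $ef\in(Sfe]$, together with the extra room furnished by (i) in the form $a\,\lc\,a^2$ (equivalently $(Seb]=(S(eb)^2]$), so as to reorganise the idempotent factors until the residual word is absorbed and a trailing $e$ is produced. Hypothesis~(ii) is evidently tailored to exactly this cancellation, so I would concentrate the computational effort there; the remaining reduction to left Cliffordness is then routine via Theorem~\ref{cr30}.
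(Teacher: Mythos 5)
Your forward direction is sound and essentially coincides with the paper's: you apply (i) to $ef$ and then use left Cliffordness (condition (4) of Theorem \ref{cr30}) to rotate the tail of $ef\,s\,efef$ into something ending in $fe$; the paper does the same via the factorization $efef = e(fef)$ and $fef \leq x_3\,fe$, where you use $(efe)f \leq x(efe)$ followed by the padding $efe \leq ef^3e$. Your preliminary remark that (i) is equivalent to ``regular and left regular'' is also correct, and matches how the paper derives (i) from $a\leq axaxa$ and $ax\leq x_1a$.

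The converse, however, is genuinely open in your proposal, and the obstruction you run into is created by your choice of target. Aiming at condition (3) of Theorem \ref{cr30} means proving $eb\in(Se]$ with the idempotent on the \emph{left} and an arbitrary $b$ on the right; hypothesis (ii) only speaks about products of two ordered idempotents, so it gives no purchase on the stray factor $b$, and every substitution regenerates $eb$ --- hence your circular bounds. The paper instead proves the mirror statement $ae\in(Sa]$ for $a\in S$ and $e\in E_\leq(S)$, and the decisive move is the choice of the \emph{second} idempotent: from (i) write $a\leq axa^2$, check that $xa^2\in E_\leq(S)$ (left-multiply $a^2\leq a^2xa^2$ by $x$), and feed the pair $(xa^2,e)$ into (ii) to obtain $xa^2e\leq xa^2e\,u\,e\,xa^2$. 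The point is that the trailing factor $e\,xa^2$ \emph{terminates in $a$}, so $ae\leq a(xa^2e)\leq (axa^2euexa)\,a\in(Sa]$: no circularity arises because the element recovered on the right is $a$ itself rather than $ae$ or $eb$. That is the idea missing from your sketch. (Note also that the paper stops at ``$ae\in(Sa]$ for every $a$ and every ordered idempotent $e$'' and treats the passage to $(aS]\subseteq(Sa]$ as immediate; if you reconstruct the argument you should still spell out that final reduction, which is not one of the conditions listed verbatim in Theorem \ref{cr30}.)
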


\begin{proof}
First suppose that $S$ is a left Clifford ordered semigroup. Let $a
\in S$. Then there is $x \in S$ such that   $a \leq axa$, by the
regularity of $S$. This implies $a \leq axa xa $. Since $S$ is left
Clifford ordered semigroup,  there is $x_1 \in S$ such that $ax \leq
x_1a $. So from $a \leq ax a xa$ we have $a \leq ax x_1a^2$. Thus $a
\in (aSa^2]$.

Next consider $e, f \in E_\leq (S)$. Then there is $x_2  \in S$ such
that
$$ef \leq ef x_2 ef ef, \;\textrm{by condition (i)}.$$
Now  there is $x_3 \in S$ such that
$$fef \leq x_3 fe, \;\textrm{ since} \;S \;\textrm{is left Clifford}.$$
Then $ef \leq ef x_2 e x_3 fe$ and so $ef \in (ef S fe]$.

Conversely assume that the given conditions hold in $S$. Let $a \in
S$ and $e \in E_\leq(S)$. Then by condition (i) there is $x \in S$
such that $a \leq axa^2$. This implies $a^2 \leq a^2 xa^2$ and  so
$xa^2 \in E_\leq (S)$. Then  there is $u \in S$ such that $xa^2 e
\leq xa^2 e u e xa^2$. Now $ae  \leq axa^2e$ implies that $ae \leq
axa^2 eue xa^2$. Thus $ae \in (Sa]$.
\end{proof}
Now we characterize the complete semilattice decomposition of left
Clifford ordered semigroups.
\begin{Theorem}\label{cr32}
Let $S$ be an ordered semigroup. Then $S$ is left Clifford ordered
semigroup if and only if $\lc$ is the least complete semilattice
congruence on $S$.
\end{Theorem}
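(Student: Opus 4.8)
The plan is to prove the two implications by comparing $\lc$ with the two‑sided relation $\jc$ and re‑using the machinery of Lemma~\ref{cr14}. One point to fix at the outset is that Green's relations are defined only on a regular ordered semigroup, so the very mention of $\lc$ presupposes that $S$ is regular; in the necessity direction this is part of the hypothesis (left Clifford includes regular), and in the sufficiency direction it is what makes the assumption ``$\lc$ is the least complete semilattice congruence'' meaningful. I therefore treat $S$ as regular throughout, so that $L(x)=(Sx]$.

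For necessity I would start from Theorem~\ref{cr30} and Theorem~\ref{cr34}: a left Clifford $S$ satisfies $a\in(aSa^2]$ for every $a$ and $(bS]\subseteq(Sb]$ for every $b$. First I show $\lc=\jc$. The inclusion $\lc\subseteq\jc$ is automatic. For $\jc\subseteq\lc$, if $a\jc b$ then $a\leq ubv$ for some $u,v$; since $bv\in(bS]\subseteq(Sb]$ there is $w$ with $bv\leq wb$, whence $a\leq uwb\in Sb$, so $a\in(Sb]=L(b)$ and $L(a)\subseteq L(b)$; by symmetry $L(a)=L(b)$, i.e. $a\lc b$. Next I show $\jc$ is the least complete semilattice congruence by replaying Lemma~\ref{cr14}, checking that only $a\in(aSa^2]$ is needed: from $a\leq axa^2$ one gets $(SaS]=(Sa^2S]$, hence $a\jc a^2$, and from this single fact the proofs that $ab\jc ba$, that $\jc$ is a congruence, and that $\jc$ is complete go through verbatim. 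Crucially, the minimality argument in Lemma~\ref{cr14} never uses complete regularity of $S$: it only manipulates the defining inequalities of $a\jc b$ against an arbitrary complete semilattice congruence $\xi$, so it applies here unchanged. Combining, $\lc=\jc$ is the least complete semilattice congruence.

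For sufficiency I would assume $\lc$ is the least complete semilattice congruence and recover the defining inclusion $(aS]\subseteq(Sa]$. The lever is that, being a semilattice congruence, $\lc$ is commutative: $xy\lc yx$ for all $x,y$. Thus for any $a,s\in S$ we have $as\lc sa$, so $L(as)=L(sa)=(Ssa]\subseteq(Sa]$; since regularity gives $as\in(S\,as]=L(as)$, we conclude $as\in(Sa]$. Hence every $u\in(aS]$, say $u\leq as$, lies in $(Sa]$, proving $(aS]\subseteq(Sa]$. With $S$ regular this is precisely the definition of a left Clifford ordered semigroup (equivalently Theorem~\ref{cr30}), and the proof is complete.

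The step I expect to be the main obstacle is not in either direction as organised above but in the bookkeeping needed if one refuses to read regularity into the statement. Then one must \emph{recover} regularity in the sufficiency direction from $a\lc a^2$ alone, and this is delicate: $a\lc a^2$ only yields $a\in(\{a^2\}\cup Sa^2]$, and while the branch $a\leq a^2$ immediately gives $a\leq a^3\in aSa$, the branch $a\leq sa^2$ resists a direct sandwich, because applying completeness (which gives $a\lc asa^2$) or commutativity (which gives $sa^2\lc a^2s$) keeps re‑introducing a leading factor lying outside $aSa$, so the natural iteration does not visibly terminate. I would handle this by invoking the paper's convention that $\lc$ lives only on regular ordered semigroups, so that regularity is available from the start and the sufficiency direction collapses to the one‑line commutativity argument above; the genuine mathematical content then resides entirely in the identity $\lc=\jc$ and in verifying that Lemma~\ref{cr14} needs only $a\in(aSa^2]$.
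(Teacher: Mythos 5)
Your proof is correct, but the forward direction is organised quite differently from the paper's. The paper verifies directly that $\lc$ is a complete semilattice congruence on a left Clifford $S$: it checks left compatibility from $cs_1\leq uc$, derives $ab\,\lc\,ba$ and $a\,\lc\,a^2$ from the inequalities $ab\leq xa$, and then runs a minimality computation against an arbitrary complete semilattice congruence $\rho$ using $a\leq t_1b$, $b\leq t_2a$. You instead prove $\lc=\jc$ (via $(bS]\subseteq(Sb]$ to convert $a\leq ubv$ into $a\leq uwb$) and then observe that the proof of Lemma~\ref{cr14} only ever uses regularity together with $a\jc a^2$, which follows from $a\in(aSa^2]$ (Theorem~\ref{cr34}(i)) rather than from full complete regularity. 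That observation is accurate --- I checked each step of Lemma~\ref{cr14} and none needs $a\in(a^2Sa^2]$ --- so your reduction is sound; it buys economy by reusing the two-sided machinery, at the price of having to audit Lemma~\ref{cr14} for hidden hypotheses, whereas the paper's direct verification is self-contained for $\lc$. Two further points in your favour: the paper dismisses the converse as trivial, while you supply the actual argument ($as\,\lc\,sa$ from the semilattice property, hence $L(as)=(Ssa]\subseteq(Sa]$ and $as\in L(as)$ by regularity); and you correctly flag that the theorem's hypothesis ``ordered semigroup'' must be read as ``regular ordered semigroup'' for $\lc$ (and the left Clifford definition) to make sense, which is exactly the convention the paper adopts implicitly via Kehayopulu's definition of Green's relations. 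Your candid admission that you cannot recover regularity from $a\,\lc\,a^2$ alone is not a gap in the proof as organised, since regularity is part of the standing hypotheses on both sides.
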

\begin{proof}
Let $S$ be a  left Clifford ordered semigroup. Consider $a, b \in S$
such that $a \lc b$ and $c \in S$. Then  there are $s_1, s_2 \in S$
such that $$a \leq s_1b \;\textrm{and} \;b \leq s_2a.$$ Also $$cs_1
\leq uc \;\textrm{and} \;cs_2 \leq vc \;\textrm{for some} \;u, v \in
S.$$

This implies $ca \leq cs_1b \leq ucb$, thus  $ca \leq ucb $ and
similarly $cb \leq vca$. Therefore $ca \lc cb$. Hence $\lc$ is a
congruence on $S$.

Now let $a, b \in S$. Then there is $x \in S$ such that $ab \leq ab
x ab$. Also
$$ab \leq u_1a \;\textrm{and} \;bxu_1 \leq u_2b \;\textrm{for some} \;u_1, u_2 \in S.$$ Then we
have
\begin{align*}
ab &\leq ab x ab\\
   & \leq ab x u_1a\\
   & \leq au_2ba.
\end{align*}
This implies that $(Sab] \subseteq (Sba]$. Interchanging the role of
$a, b $ we get $(Sba] \subseteq (Sab]$. Thus $(Sba] = (Sab]$ and $ab
\lc ba$. Again $a \leq aya $ for some $y \in S$. This implies that
\begin{align*}
(Sa] &\subseteq (Saya]\\
     & \subseteq (Sya^2]\\
     & \subseteq (Sa^2].
\end{align*}
Thus $a\lc a^2$.

Next  consider $a, b \in S$ be such that $a \leq b$. Now there is $z
\in S$ such that  $a \leq aza $ and so $ab \leq (az)ab$. Since $S$
is a left Clifford ordered semigroup, $ab \leq va$ for some $v \in
S$, whence $a \lc ab$. Thus $\lc$ is the complete semilattice
congruence on $S$.

Let $\rho$ be a complete semilattice congruence on $S$ and $a,b \in
S$ be such that $a \lc b$. Then there are $t_1, t_2 \in S$ such that
$$a \leq t_1b \;\textrm{and} \;b \leq t_2a.$$
Then $a \leq xyxb \;\textrm{and} \;b \leq yxya$,  and then by the
completeness of $\rho$, it follows that $(a)_\rho= (at_1b)_\rho
\;and \; (b)_\rho= (bt_2a)_\rho$. This implies
\begin{align*}
(a)_\rho &= (at_1b)_\rho\\
         &= (at_1)_\rho (b)_\rho\\
         &= (at_1)_\rho (bt_2a)_\rho\\
         &= (at_1bt_2a)_\rho\\
         &= (at_1b)_\rho (bt_2a)_\rho\\
         &= (a)_\rho (bt_2a)_\rho\\
         &= (bt_2a)_\rho\\
         &= (b)_\rho.
\end{align*}  Thus $a \rho b$ and hence $\lc$ is the least complete semilattice congruence on
$S$.

The converse of this theorem follows trivially.
\end{proof}
\begin{Theorem}\label{cr33}
Let $S$ be a regular ordered  semigroup. Then $S$ is  a left
Clifford ordered semigroup if and only if it is a complete
semilattice of left group like ordered semigroups.
\end{Theorem}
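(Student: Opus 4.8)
The plan is to play the two characterizations already in hand against each other: Theorem \ref{cr32}, which tells us that for a left Clifford ordered semigroup $\lc$ is the least complete semilattice congruence, and Theorem \ref{cr3}(2), which recognizes a left group like ordered semigroup by the single condition $a \in (aSb]$ for all $a,b$. Throughout I would write $Y = S/\lc$ and, for $z \in S$, denote by $\bar z$ its $\lc$-class, using repeatedly that $z \le w$ forces $\bar z \preceq \bar w$ in $Y$ by completeness of the congruence.

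For the forward implication I would assume $S$ left Clifford. By Theorem \ref{cr32} the $\lc$-classes $\{G_\alpha\}_{\alpha \in Y}$ already form a complete semilattice decomposition of $S$ into subsemigroups, so the only task is to show that each $G_\alpha$ is left group like. Fixing $a,b \in G_\alpha$, I would extract two inequalities: from $a \lc b$ and regularity, $a \in (Sa] = (Sb]$, so $a \le sb$ for some $s \in S$; and from Theorem \ref{cr34}(i), $a \le axa^2$ for some $x \in S$. Substituting $a \le sb$ into the last factor of $axa^2 = (axa)\,a$ gives $a \le a(xas)b$. It then remains to check that the witness $c = xas$ lies in $G_\alpha$: reading the two inequalities in $Y$ yields $\alpha \preceq \bar x$ and $\alpha \preceq \bar s$ (with $\alpha = \bar a = \bar b$), so in $\overline{xas} = \bar x\,\alpha\,\bar s$ the interior factor $\alpha$ absorbs both neighbours, collapsing the product to $\alpha$. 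Hence $c \in G_\alpha$ and $a \in (aG_\alpha b]$, and Theorem \ref{cr3}(2), applied to $G_\alpha$ as an ordered semigroup in its own right, shows $G_\alpha$ is left group like.

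For the converse I would suppose $S$ is a complete semilattice $Y$ of left group like ordered semigroups $\{G_\alpha\}$. Regularity of $S$ is immediate, since each $G_\alpha$ is regular. To obtain $(aS] \subseteq (Sa]$, take $a \in G_\alpha$ and $s \in G_\beta$; then both $as$ and $sa$ lie in the single component $G_{\alpha\beta}$, and the left group like property of $G_{\alpha\beta}$ applied to the pair $(as, sa)$ furnishes $x$ with $as \le x(sa) = (xs)a$, so $as \in (Sa]$. As every element of $(aS]$ lies below some $as$, this yields $(aS] \subseteq (Sa]$, i.e. $S$ is left Clifford.

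The one genuine obstacle I anticipate is the bookkeeping in the forward direction, namely guaranteeing that the element $c$ built to witness $a \in (aG_\alpha b]$ actually stays inside the class $G_\alpha$. This is precisely where the sharper condition $a \in (aSa^2]$ of Theorem \ref{cr34} is indispensable rather than plain regularity: it supplies an extra interior factor of class $\alpha$ whose absorption forces $\bar c = \alpha$, whereas the naive choice coming from $a \le aua$ would leave the witness in a possibly strictly larger $\lc$-class and break the argument.
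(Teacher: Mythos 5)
Your proof is correct and follows essentially the same route as the paper: Theorem \ref{cr32} supplies the complete semilattice decomposition of $S$ into its $\lc$-classes, each class is shown to be left group like by manufacturing a witness that is forced back into the class by completeness of the congruence, and the converse uses that $ab$ and $ba$ lie in a common component which is left group like. Two minor remarks: the indispensability you claim for Theorem \ref{cr34}(i) in your last paragraph is overstated --- the paper's witness $asxbt$, built from plain regularity $a\le asa$, $b\le btb$ together with $a\le xb$, already lies in the class, the real point being only that the witness must contain a factor from the class so that the semilattice product collapses to $\alpha$ --- although your routing through Theorem \ref{cr3}(2) does have the mild advantage of also certifying regularity of each class (take $b=a$ in $a\in(aG_\alpha b]$), a requirement of the definition of left group like that the paper's proof leaves implicit.
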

\begin{proof}
Let $S$ be a  left Clifford ordered semigroup. In view of Theorem
\ref{cr32} it is sufficient to show that  each  $\lc$-class is a
left group like ordered semigroup. Let $L$ be an $\lc$-class in $S$.
Then $L$ is a subsemigroup of $S$, since $\lc $ is a  complete
semilattice congruence on $S$. Let $a, b \in L$. Then there are $s,
t, x \in S$ such that $$a \leq xb, \; a \leq asa \;\textrm{and} \;b
\leq btb.$$ This implies  $a \leq asxb \leq (asxbt)b =ub,
\;\textrm{where} \;u = asxbt$.

Since  $\lc$ is complete semilattice congruence on $S$, we have
$$a \lc \;aub \lc \;a^2s xb tb \lc \;as xbt \;=u. $$ This shows that
$u \in L$.  Thus $L$ is left group like ordered semigroup.

Conversely, let $\rho$ be a  complete semilattice  congruence on $S$
and each $\rho$-class is a left group like ordered semigroup.
Consider $a, b \in S$.  Since $\rho$ is a complete semilattice
congruence on $S$, $ab \rho ba$ and hence $ab, ba$ are in the left
group like ordered semigroup $(ab)_\rho$. So $ab \leq xba$ for some
$x \in S$.  Hence  $S$ is a left Clifford ordered semigroup.
\end{proof}

Characterization of right Clifford  ordered semigroups can be done
dually.

\bibliographystyle{plain}

\begin{thebibliography}{10}
\baselineskip 5mm

\bibitem{Cao1999}
Y. Cao, On weak commutativity of po-semigroups and their semilattice decompositions, \emph{Semigroup Forum} \textbf{58}(1999), 386-394.

\bibitem{Cao2000}
Y. Cao, Chain decompositions of ordered semigroups, \emph{Semigroup Forum} \textbf{65}(2002), 83-106.

\bibitem{CX2000}
Y. Cao and X. Xinzhai, {Nil-extensions of simple po-semigroups}, \emph{Communications in Algebra} \textbf{28(5)}(2000), 2477-2496.

\bibitem{Gao1998}
Z. Gao, On the least property of the semilattice congruence on
PO-semigroups, \emph{Semigroup Forum}, \textbf{56}(1998), 323-333.

\bibitem{HJ1}
K. Hansda and A. Jamader, On inverse ordered semigroups,
\emph{communicated}.

\bibitem{Howie1995}
J. M. Howie, \emph{Fundamentals of Semigroup Theory}, Clarendon Press, Oxford,  1995.

\bibitem{Ke1990}
N. Kehayopulu, Remarks on  ordered semigroups, \emph{Math. Japonica}
\textbf{35}(1990), 1061-1063.

\bibitem{ke91}
N. Kehayopulu,   Note on Green's relation in ordered semigroup ,
\emph{Math. Japonica}, \textbf{36}(1991), 211-214.

\bibitem{Ke1992}
N. Kehayopulu, On completely regular $poe$-semigroups, \emph{Math. Japonica} \textbf{37}(1992), 123-130.

\bibitem{Ke MJ 92}
N. Kehayopulu, {On  regular  duo ordered semigroups}, \emph{Math.
Japonica}, \textbf{37}(1992), 535-540.



\bibitem{Ke1998}
N. Kehayopulu, On completely regular ordered semigroups,
\emph{Scinetiae Mathematicae} \textbf{1(1)}(1998), 27-32.

\bibitem{LK}
S. K. Lee and Y. I. Kwon, On completely regular and quasi-completely
regular ordered semigroups, \emph{Scinetiae Mathematicae}
\textbf{2}(1998), 247-251.
 ,


\bibitem{PRbook}
M. Petrich and N. Reilly,  {Completely Regular Semigroups},
\emph{Wiley}, 1999.

\bibitem{Saito1962}
T. Saito, Ordered idempotent semigroups, \emph{J. Math. Soc. Japan} \textbf{14(2)}(1962), 150-169.

\bibitem{Saito1963}
T. Saito, Regular elements in an ordered semigroup, \emph{Pacific J. Math.} \textbf{13}(1963), 263-295.

\bibitem{Saito1964}
T. Saito, Ordered completely regular semigroups, \emph{Pacific J. Math.} \textbf{14(1)}(1964), 295-308.

\bibitem{Saito1971}
T. Saito, {Ordered inverse semigroups}, \emph{Trans. Amer. Math. Soc.} \textbf{153}(1971), 99-138.
\end{thebibliography}

\end{document}